\newtheorem{theorem}{Theorem}[section]
\newtheorem{lemma}[theorem]{Lemma}
\newtheorem{prop}[theorem]{Proposition}
\newtheorem{cor}[theorem]{Corollary}
\theoremstyle{remark}
\newcommand{\R}{\mathbb R}
\newcommand{\C}{\mathbb C}
\newcommand{\Z}{\mathbb Z}
\newcommand{\Q}{\mathbb Q}
\newcommand{\A}{\mathbb A}
\newcommand{\g}{\mathfrak g}
\newcommand{\cO}{\mathcal O}
\newcommand{\cL}{\mathcal L}
\newcommand{\cS}{\mathcal S}
\newcommand{\cE}{\mathcal E}
\newcommand{\cH}{\mathcal H}
\newcommand{\tPhi}{\widetilde{\Phi}}
\newcommand{\tPsi}{\widetilde{\Psi}}
\newcommand{\tpsi}{\widetilde{\psi}}
\newcommand{\tcE}{\widetilde{\cE}}
\newcommand{\tcH}{\widetilde{\cH}}
\newcommand{\tr}{\text{tr}}
\newcommand{\be}{\begin{equation}}
\newcommand{\ee}{\end{equation}}
\newcommand{\bes}{\begin{equation*}}
\newcommand{\ees}{\end{equation*}}
\newcommand{\ba}{\begin{eqnarray}}
\newcommand{\ea}{\end{eqnarray}}
\newcommand{\bas}{\begin{eqnarray*}}
\newcommand{\eas}{\end{eqnarray*}}
\title{Endoscopy and cohomology of a quasi-split $U(4)$}
\author{Simon Marshall}
\address{Department of Mathematics\\
University of Wisconsin -- Madison\\
480 Lincoln Drive\\
Madison\\
WI 53706, USA}
\email{marshall@math.wisc.edu}
\thanks{Supported by NSF grant DMS-1201321.}
\begin{document}

\begin{abstract}
We prove asymptotic upper bounds for the $L^2$ Betti numbers of the locally symmetric spaces associated to a quasi-split $U(4)$.  These manifolds are 8-dimensional, and we prove bounds in degrees 2 and 3, with the behaviour in the other degrees being well understood.  In degree 3, we conjecture that these bounds are sharp.  Our main tool is the endoscopic classification of automorphic representations of $U(N)$ by Mok.
\end{abstract}

\maketitle

\section{Introduction}

Let $E$ be an imaginary quadratic field.  Let $N \ge 1$, let $U(N)$ be the quasi-split unitary group of degree $N$ with respect to $E/\Q$, and let $G$ be an inner form of $U(N)$.  Let $\Gamma \subset G(\Q)$ be an arithmetic congruence lattice, and for $n \ge 1$ let $\Gamma(n)$ be the corresponding principal congruence subgroup of $\Gamma$.  Let $K_\infty$ be a maximal compact subgroup of $G(\R)$.  Let $Y(n) = \Gamma(n) \backslash G(\R) / K_\infty$, which is a complex orbifold (or manifold if $n$ is large enough).  We let $H^i_{(2)}(Y(n))$ be the $L^2$ cohomology groups of $Y(n)$.  By \cite{BC}, $H^i_{(2)}(Y(n))$ is equal to the space of square-integrable harmonic $i$-forms on $Y(n)$, and we shall identify it with this space from now on.  Note that $H^i_{(2)}(Y(n)) = H^i(Y(n))$ when $Y(n)$ is compact.  We set $h^i_{(2)}(Y(n)) = \dim H^i_{(2)}(Y(n))$.  This article is interested in how $h^i_{(2)}(Y(n))$ grow with $n$, specifically in the case when $G = U(4)$.

We let $V(n) = |\Gamma : \Gamma(n)|$, which is asymptotically equal to the volume of $Y(n)$.  The standard bound that we wish to improve over is $h^i_{(2)}(Y(n)) \ll V(n)$.  This follows from the equality of $h^i_{(2)}$ with an ordinary Betti number if $\Gamma$ is cocompact, and otherwise from the noncompact version of Matsushima's formula in \cite[Prop 5.6]{BG} which expresses $h^i_{(2)}(Y(n))$ in terms of automorphic representations, together with Savin's bound \cite{Sa} for the multiplicity of a representation in the cuspidal spectrum and Langlands' theory of Eisenstein series.

The basic principle that we shall use to bound $h^i_{(2)}(Y(n))$ is the fact that, if $i$ is not half the dimension of $Y(n)$, the archimedean automorphic forms that contribute to $h^i_{(2)}(Y(n))$ must be nontempered.  In the case where $\Gamma$ is cocompact, one may combine this principle with the trace formula and asymptotics of matrix coefficients to prove a bound of the form $h^i_{(2)}(Y(n)) \ll V(n)^{1-\delta}$ for some $\delta > 0$.  In \cite{SX}, Sarnak and Xue suggest the optimal bound that one should be able to prove in this way using only the archimedean trace formula.  In the case when $N = 3$ and $\Gamma$ is cocompact (which implies that $Y(n)$ have real dimension 4), they predict that $h^1_{(2)}(Y(n)) \ll_\epsilon V(n)^{1/2+\epsilon}$, while they prove that $h^1_{(2)}(Y(n)) \ll_\epsilon V(n)^{7/12+\epsilon}$.

There is a deeper way in which one may exploit nontemperedness to prove bounds for cohomology.  In \cite{Mo} Mok, following Arthur \cite{A}, classifies the automorphic spectrum of $U(N)$ in terms of conjugate self-dual cusp forms on $GL_M/E$ for $M \le N$.  One of the implicit features of this classification is that if a representation $\pi$ on $U(N)$ is sufficiently nontempered at one place, then it must be built up from cusp forms on groups $GL_M/E$ with $M$ strictly less than $N$ -- in other words, $\pi$ comes from a smaller group.  We have been interested in deriving quantitative results from this qualitative feature of the classification.  In \cite{Ma}, we used this (more precisely, the complete solution of endoscopy for $U(3)$ by Rogawski in \cite{Ro}) to prove that $h^1_{(2)}(Y(n)) \ll_\epsilon V(n)^{3/8+\epsilon}$ when $N = 3$ and $G$ is arbitrary, strengthening the bound of Sarnak and Xue.  Moreover, we proved that this bound is sharp.  In this article, we partially extend this result to the case $G = U(4)$.  Note that in this case, the real dimension of $Y(n)$ is 8.

\begin{theorem}
\label{simplemainthm}

If $G = U(4)$ and $i = 2$ or $3$, and $n$ is only divisible by primes that split in $E$, we have $h^i_{(2)}(Y(n)) \ll_\epsilon V(n)^{8/15 + \epsilon}$.

\end{theorem}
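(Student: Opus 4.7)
The starting point is the spectral expansion of the $L^2$ Betti numbers in terms of automorphic representations, as in \cite{BC}:
\[
h^i_{(2)}(Y(n)) \;=\; \sum_\pi m(\pi)\, \dim \pi_f^{K(n)}\, \dim H^i(\g,K_\infty;\pi_\infty),
\]
where $\pi$ ranges over automorphic representations of $G(\A)$ and $K(n)\subset G(\A_f)$ is the principal congruence subgroup of level $n$.  Since $G(\R)=U(2,2)$ has symmetric space of dimension $8$, the degrees $i=2,3$ lie strictly below the middle degree $4$, so any $\pi_\infty$ contributing to the sum is cohomologically induced from a proper $\theta$-stable parabolic in the Vogan--Zuckerman sense, and in particular is nontempered at infinity.

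By Mok's endoscopic classification \cite{Mo}, each such $\pi$ is attached to an Arthur parameter
\[
\psi \;=\; \boxplus_j\,\mu_j\boxtimes[d_j],
\]
with $\mu_j$ a conjugate self-dual cuspidal representation of $GL_{m_j}(\A_E)$, $[d_j]$ the $d_j$-dimensional irreducible of $SL_2(\C)$, and $\sum_j m_j d_j = 4$.  Nontemperedness at infinity forces at least one $d_j\ge 2$, so $\psi$ is built from cusp forms on $GL_m/E$ with $m\le 3$.  The first step is to enumerate the handful of parameter shapes compatible with nontrivial cohomology in degree $2$ or $3$ at the archimedean place, using the Adams--Johnson description of cohomological $A$-packets to match each nontempered shape with its contribution to $H^*(\g,K_\infty;\pi_\infty)$.

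For each admissible shape, the contribution to $h^i_{(2)}(Y(n))$ factors as a global count of parameters times a local bound on $\dim \pi_f^{K(n)}$.  Globally, the number of Arthur parameters of a given shape with bounded archimedean infinitesimal character is controlled by Weyl's law together with Savin's bound \cite{Sa} applied on the lower-rank groups $GL_{m_j}/E$, and grows no faster than the volume $V^{G'}(n)$ of a product $G'$ of smaller quasi-split unitary groups.  Locally, the hypothesis that every prime divisor of $n$ splits in $E$ is crucial: at each such $p$ one has $U(4)(\Q_p)\simeq GL_4(\Q_p)$, and the local component of $\psi$ at $p$ is a Langlands quotient parabolically induced from a proper Levi, so $\dim \pi_p^{K_p(n)}$ can be bounded in terms of the dimension of fixed vectors of the inducing representation on the smaller group.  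Combining these two inputs for each parameter shape and optimizing over shapes produces the exponent; the extremal shape, which I expect to be one of the form $\mu\boxtimes[2]\boxplus\nu$ with $\mu$ a Hecke character and $\nu$ a cusp form on $GL_2/E$, should yield $8/15$.

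The principal obstacle is to align the global and local bounds sharply enough that their product improves on the naive volume bound.  This requires pushing Mok's multiplicity formula through with enough precision to avoid overcounting $\pi$ within a given global packet, handling the residual and Eisenstein contributions on the same footing as the cuspidal ones, and producing a genuinely sub-generic bound on $\dim \pi_p^{K_p(n)}$ for each nontempered local component at a split prime.  The split-prime hypothesis sidesteps the analogous local analysis at inert or ramified places, where one would have to work directly on non-split unitary groups; removing that hypothesis, or extending to the degrees $i=1$ or $i=4$, would presumably require substantial additional input.
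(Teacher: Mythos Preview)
Your overall strategy matches the paper's: apply Borel--Casselman, use Mok's classification to enumerate the nontempered parameter shapes, bound local fixed-vector dimensions at split primes via parabolic induction, and sum over parameters by pushing down to smaller groups.  You also correctly identify the extremal shape $\mu \boxtimes [2] \boxplus \nu$ (type (\ref{para}) in the paper).

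However, your account of the inert primes contains a genuine misunderstanding.  You write that the split-prime hypothesis ``sidesteps the analogous local analysis at inert or ramified places,'' but it does not: the factorisation $\sum_{\pi \in \Pi_\psi} \dim \pi_f^{K(n)} = \prod_p \sum_{\pi_p \in \Pi_{\psi_p}} \dim \pi_p^{K_p(n)}$ runs over \emph{all} primes, and at inert $p$ the packet $\Pi_{\psi_p}$ is not a singleton.  The split hypothesis on $n$ only guarantees that the level at inert primes is hyperspecial, which is precisely what allows the twisted fundamental lemma to apply.  The paper then uses Mok's character identities (Theorem 3.2.1 of \cite{Mo}) to prove that the local packet sum on $U(4)_p$ \emph{equals} the corresponding product of local packet sums on $U(2)_p$ (Lemma \ref{paralocalnonsplit}).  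Without this identity at every inert prime, you cannot reassemble the product over $p$ into a global object on $U(1) \times U(2)$, and the sum over parameters does not reduce to a discrete-spectrum multiplicity on the smaller group.  This is the main missing ingredient in your sketch; the analogous issue at $p \in S_f$ is handled separately by choosing auxiliary levels small enough.

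A secondary gap: the shape $\nu(2) \boxtimes \phi^N$ with $\phi^N$ cuspidal on $GL_2/E$ (type (\ref{parb})) also contributes at the same $n^9$ level and requires further ideas you do not mention.  At a split prime $\pi_p$ is a Speh-type Langlands quotient, and the paper does not bound its fixed vectors sharply; instead it bounds by the full induced representation, incurs an extra factor of $\dim \mu_w^{\widetilde{K}'_w(n)}$, controls that factor by a coset-index bound (Lemma \ref{parbKirilov}), and then descends from $GL(2,\A_E)$ back to $U(2)$ via $\xi_-$ and another twisted character identity in order to sum over $\phi^N$.  Finally, the Adams--Johnson conjecture is not invoked in the proof; the paper only uses that cohomological $\pi_\infty$ forces $\psi$ non-generic (via Proposition 13.4 of \cite{BMM}) together with the explicit infinitesimal-character constraints of Lemma \ref{archlocal}.
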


See Theorem \ref{mainthm} for a precise statement.  We expect Theorem \ref{simplemainthm} to be sharp in the case $i = 3$, but when $i = 2$ we expect the true order of growth to be $V(n)^{2/5 + \epsilon}$ for reasons discussed below.  Note that we have  $h^1_{(2)}(Y(n)) = 0$ for all $n$, by combining the noncompact Matsushima formula of \cite[Prop 5.6]{BG} with the vanishing theorems of e.g. $\mathsection$10.1 of \cite{BW}.  The results of \cite{Sa} also imply that $h^4_{(2)}(Y(n)) \gg V(n)$.

\subsection{Outline of proof}

To describe the method of proof of Theorem \ref{simplemainthm} in more detail, we begin by outlining the classification of Arthur and Mok.  We define an Arthur parameter for $U(N)$ to be a formal linear combination $\psi = \nu(n_1) \boxtimes \mu_1 \boxplus \ldots \boxplus \nu(n_l) \boxtimes \mu_l$, where $\nu(k)$ denotes the unique irreducible (complex-algebraic) representation of $SL(2,\C)$ of dimension $k$, and $\mu_i$ is a conjugate self-dual cusp form on $GL_{m_i}/E$, subject to certain conditions including that $N = \sum n_i m_i$.  To each $\psi$, there is associated a packet $\Pi_\psi$ of representations of $U(N)(\A)$, certain of which occur in the automorphic spectrum.  Moreover, the entire automorphic spectrum is obtained in this way.  If we combine this classification with the noncompact case of Matsushima's formula, we have

\be
\label{Matsueg}
h^i_{(2)}(Y(n)) \le \sum_\psi \sum_{\pi \in \Pi_\psi} h^i( \g, K; \pi_\infty) \dim \pi_f^{K(n)}.
\ee
Here, $\g$ is the Lie algebra of $U(N)(\R)$, $K$ is a maximal compact subgroup of $U(N)(\R)$, we let $H^i(\g,K;\pi_\infty)$ denote $(\g,K)$ cohomology, and $h^i(\g,K;\pi_\infty) = \dim H^i(\g,K;\pi_\infty)$.  As mentioned above, if $i$ is not the middle degree, then those $\psi$ contributing to the sum must be \textit{non-generic}, i.e. one of the representations of $SL(2,\C)$ must be nontrivial.

We deduce Theorem \ref{simplemainthm} from (\ref{Matsueg}) in two steps.

\noindent
{\bf Step 1:} Bound $\sum_{\pi_f \in \Pi_{\psi,f}} \dim \pi_f^{K(n)}$ for each each $\psi$, where $\Pi_{\psi,f}$ denotes the finite part of the packet $\Pi_\psi$.

\noindent
{\bf Step 2:} Sum the resulting bounds over those $\psi$ that contribute to cohomology in the required degree.

We begin step 1 by writing $\Pi_{\psi,f} = \otimes_p \Pi_{\psi,p}$, so that we must bound $\sum_{\pi_p \in \Pi_{\psi,p}} \dim \pi_p^{K(n)}$ for each $p$.  When $p$ is split in $E$, $\Pi_{\psi,p}$ is an explicitly described singleton, and it is easy to do this directly.  When $p$ is nonsplit, we use the trace identities appearing in the definition of $\Pi_{\psi,p}$ \cite[Theorem 3.2.1]{Mo}.  By writing $\dim \pi_p^{K(n)}$ as a trace, these allow us to relate $\sum_{\pi_p \in \Pi_{\psi,p}} \dim \pi_p^{K(n)}$ to objects like $\dim \mu_i^{K'(n)}$, where $\mu_i$ is one of the cusp forms appearing in $\psi$ and $K'(n)$ is a suitable congruence subgroup of $GL(m_i)$.

As an example, one type of packet that contributes to (\ref{Matsueg}) when $N = 4$ is those of the form $\psi = \nu(2) \boxtimes \mu$, where $\mu$ is a cusp form on $GL_2/E$.  After carrying out step 1 in this case, we obtain

\be
\label{U2eg}
\sum_{\pi_f \in \Pi_{\psi,f}} \dim \pi_f^{K(n)} \ll n^{5 + \epsilon} \sum_{\pi'_f \in \Pi(\mu)_f} \dim \pi_f'^{K'(n)}
\ee
where $\Pi(\mu)$ is the packet on $U(2)$ corresponding to $\mu$, and $K'(n)$ is the standard principal congruence subgroup of level $n$ on $U(2)$.  Step 2 is bounding the right hand side of (\ref{U2eg}).  We do this by observing that if $\Pi_\psi$ contains a cohomological representation, and $\pi' \in \Pi(\mu)$ as in (\ref{U2eg}), then there are only finitely many possibilities for the infinitesimal character of $\pi'_\infty$, and hence of $\pi'_\infty$ itself.  We may therefore bound the right hand side of (\ref{U2eg}) in terms of the multiplicities of archimedean representations on $U(2)$, and these may be bounded by the results of Savin.

The reason we do not expect Theorem \ref{simplemainthm} to be sharp when $i = 2$ is that the main contribution to $h^2_{(2)}$ comes from parameters of the form $\nu(2) \boxtimes \mu$ with $\mu$ on $GL_2$.  (Note that this relies on the Adams-Johnson conjectures on the structure of cohomological Arthur packets, which have now been proved by Arancibia, Moeglin, and Renard \cite{AMR}.)  We do not have sharp bounds for the contribution from these parameters, because we do not have sharp bounds for the dimensions of spaces of $K$-fixed vectors in Speh representations induced from $GL_2 \times GL_2$ on $GL_4$.  To be more precise, if $\pi$ is such a Speh representation of $GL(4,\Q_p)$, we require a bound for $\dim \pi^{K(p^k)}$, where $K(p^k)$ is the usual principal congruence subgroup, that is uniform in both $k$ and $\pi$.  In particular, this is more difficult than knowing the Kirillov dimension of these representations.

We have restricted to levels that are split in $E$ because of an issue with the twisted fundamental lemma, which is used in step 1 in the case of inert primes.  Allowing level in this argument would require an extension of the twisted FL, which states that the twisted transfer takes the characteristic functions of principal congruence subgroups to functions of the same type.  This would follow from the twisted FL for Lie algebras, which is not known at this time.  However, it should be possible to prove it by following Waldspurger's proof for groups in \cite{Wa}.

The tools used in the proof should extend to a general $U(N)$ with a little extra work.  However, because the recipe for the degrees of cohomology on $U(N)$ to which an Arthur parameter can contribute is complicated, the result this would give for cohomology growth would not be as strong.  

{\bf Acknowledgements} We would like to thank Tasho Kaletha and Sug Woo Shin for helpful comments.

\section{The endoscopic classification for $U(N)$}
\label{class}

In this section we describe the endoscopic classification for the quasi-split group $U(N)$ by Mok.  Because of the large amount of notation that must be introduced to do this in full, we shall often omit details that are not directly relevant to the proof of Theorem \ref{simplemainthm}.

\subsection{Number fields}
\label{class1}

Throughout this section, $F$ will denote a local or global field of characteristic 0, and $E$ will denote a quadratic \'etale $F$-algebra.  We will assume that $E$ is a quadratic extension of $F$ unless specified otherwise.  The conjugation of $E$ over $F$ will be denoted by $c$.  We set $\Gamma_F = \text{Gal}(\overline{F}/F)$.  The Weil groups of $F$ and $E$ will be denoted by $W_F$ and $W_E$ respectively.  If $F$ is local, we let $L_F$ denote its local Langlands group, which is given by $W_F$ if $F$ is archimedean and $W_F \times SU(2)$ otherwise.  If $F$ is global, the adeles of $F$ and $E$ will be denoted by $\A$ and $\A_E$.  If $F$ is local (resp. global), $\chi$ will denote a character of $E^\times$ (resp. $\A_E^\times / E^\times$) whose restriction to $F^\times$ (resp. $\A^\times$) is the quadratic character associated to $E/F$ by class field theory.  We will often think of $\chi$ as a character of $W_E$.

\subsection{Algebraic groups}
\label{class2}

For any $N \ge 1$, we let $U(N)$ denote the quasi-split unitary group over $F$ with respect to $E / F$, whose group of $F$-points is

\bes
U(N)(F) = \{ g \in GL(N,E) | \; {}^t c(g) J g = J \}
\ees
where

\bes
J = \left( \begin{array}{ccc} && 1 \\ & \iddots & \\ 1 && \end{array} \right).
\ees
In the case when $E = F \times F$, we have

\bes
U(N)(F) = \{ (g_1,g_2) \in GL(N,F) \times GL(N,F) | \; g_2 = J {}^t g_1^{-1} J^{-1} \}.
\ees
Projection onto the first and second factors defines isomorphisms $\iota_1, \iota_2 : U(N)(F) \simeq GL(N,F)$, and we have $\iota_2 \circ \iota_1^{-1} : g \mapsto J {}^t g^{-1} J^{-1}$.

We define $G(N) = \text{Res}_{E/F} GL(N)$.  We let $\theta$ denote the automorphism of $G(N)$ whose action on $F$-points is given by

\bes
\theta(g) = \Phi_N {}^tc(g)^{-1} \Phi_N^{-1} \quad \text{for} \quad g \in G(N)(F) \simeq GL(N,E),
\ees
where

\bes
\Phi_N = \left( \begin{array}{cccc} &&& 1 \\ && -1 & \\ & \iddots && \\ (-1)^{N-1} &&& \end{array} \right).
\ees
We define $\widetilde{G}^+(N) = G(N) \rtimes \langle \theta \rangle$, and let $\widetilde{G}(N)$ denote the $G(N)$-bitorsor $G(N) \rtimes \theta$.  We will denote these groups by $U_{E/F}(N)$, $G_{E/F}(N)$, etc. when we want to explicate the dependence on the extension $E/F$. 

Our discussion in this section will implicitly require choosing Haar measures on the $F$-points of these groups when $F$ is local, in particular when discussing transfers of functions and character relations.  We may do this in an arbitrary way, subject only to the condition that the Haar measures assign mass 1 to a hyperspecial maximal compact subgroup when one exists.  This condition allows us to state the fundamental lemma without the introduction of any constant factors.

\subsection{$L$-groups and embeddings}
\label{class3}

If $G$ is a connected reductive algebraic group over $F$, the $L$-group ${}^LG$ is an extension $\widehat{G} \rtimes W_F$, where $\widehat{G}$ is the complex dual group of $G$. If $G_1$ and $G_2$ are two such groups, an $L$-morphism ${}^L G_1 \to {}^L G_2$ is a map that reduces to the identity map on $W_F$.  An $L$-embedding is an injective $L$-morphism.  In this paper we shall only need to consider ${}^L G$ when $G$ is a product of the groups $U(N)$, $GL(N)$, and $G(N)$.  Because the $L$-group of $G_1 \times G_2$ is the fiber product of ${}^L G_1$ and ${}^L G_2$ over $W_F$, it suffices to specify ${}^L G$ when $G$ is one of these groups. We have ${}^L GL(N) = GL(N,\C) \times W_F$.  We have ${}^LU(N) = GL(N,\C) \rtimes W_F$, where $W_F$ acts through its quotient $\text{Gal}(E/F)$ via the automorphism

\bes
g \mapsto \Phi_N {}^t g^{-1} \Phi_N^{-1}.
\ees
We have ${}^L G(N) = (GL(N,\C) \times GL(N,\C)) \rtimes W_F$, where $W_F$ acts through $\text{Gal}(E/F)$ by switching the two factors.  We let $\widehat{\theta}$ denote the automorphism of $\widehat{G(N)}$ given by $\widehat{\theta}(x,y) = ( \Phi_N {}^t y^{-1} \Phi_N^{-1}, \Phi_N {}^t x^{-1} \Phi_N^{-1} )$.

We define the $L$-embedding $\xi_\kappa : {}^L U(N) \to {}^L G(N)$ for $\kappa = \pm 1$ as follows.  (Note we will often abbreviate $\pm 1$ to simply $\pm$.)  We define $\chi_+ = 1$ and $\chi_- = \chi$, and we choose $w_c \in W_F \setminus W_E$.  We define $\xi_\kappa$ by the following formulae.

\begin{align*}
g \rtimes 1 & \mapsto (g, {}^t g^{-1}) \rtimes 1 \text{ for } g \in GL(N,\C) \\
I \rtimes \sigma & \mapsto (\chi_\kappa(\sigma) I, \chi_\kappa^{-1}(\sigma) I ) \rtimes \sigma \text{ for } \sigma \in W_E \\
I \rtimes w_c & \mapsto (\kappa \Phi_N, \Phi_N^{-1}) \rtimes w_c.
\end{align*}
Note that the conjugacy class of $\xi_\pm$ is independent of the choice of $w_c$.

\subsection{Endoscopic data}
\label{class4}

In the cases we consider in this paper, it suffices to work with a simplified notion of endoscopic datum that we now describe.  See \cite{KS} for the general definition.  Let $G^0$ be a connected reductive group over $F$, and let $\theta$ be a semisimple automorphism of $G^0$.  Let $G$ be the $G^0$-bitorsor $G^0 \rtimes \theta$.  We let $\widehat{\theta}$ be the automorphism of $\widehat{G}^0$ that is dual to $\theta$ and preserves a fixed $\Gamma_F$-splitting of $\widehat{G}_0$.  We shall only need to consider the cases where $\theta$ is trivial or $G$ is the torsor $\widetilde{G}(N)$ defined in Section \ref{class2}, in which cases the dual automorphism $\widehat{\theta}$ is the one given in Section \ref{class3}.

We let $\widehat{G} = \widehat{G}^0 \rtimes \widehat{\theta}$.  An endoscopic datum for $G$ is a triple $(G', s, \xi')$ satisfying the following conditions.

\begin{itemize}

\item $s \in \widehat{G}$ is semi-simple.

\item $G'$ is a quasi-split connected reductive group over $F$.

\item $\xi' : {}^L G' \to {}^L G^0$ is an $L$-embedding.

\item The restriction of $\xi'$ to $\widehat{G}'$ is an isomorphism $\widehat{G}' \simeq \text{Cent}(s, \widehat{G}^0)^0$.

\item We have $\text{Ad}(s) \circ \xi' = a \cdot \xi'$, where $a : W_F \to Z(\widehat{G}^0)$ is a 1-cocycle that is cohomologically trivial if $F$ is local, and is everywhere locally trivial if $F$ is global.

\end{itemize}

We refer to \cite[Section 2.1]{KS} for the definition of equivalence of endoscopic data.  We will often omit the data $s$ and $\xi'$ if they are not immediately relevant.  We say that an endoscopic datum is elliptic if we have

\bes
(Z(\widehat{G}')^{\Gamma_F})^0 \subset Z(\widehat{G}^0)^{\widehat{\theta},\Gamma_F}.
\ees
We denote the set of equivalence classes of endoscopic data for $G$ by $\cE(G)$, and the subset of elliptic data by $\cE_\text{ell}(G)$.  We set $\cE(\widetilde{G}(N)) = \tcE(N)$.  From now on, we shall only use the notation $\cE(G)$ when $G$ is a group, i.e. when $\theta$ is trivial.

There is a subset $\tcE_\text{sim}(N) \subset \tcE_\text{ell}(N)$, called the set of simple endoscopic data, that consists of the elements $(U(N), \xi_+)$ and $(U(N), \xi_-)$ where $\xi_\pm$ are the embeddings of Section \ref{class3}.

\subsection{Transfer of functions}
\label{class5}

From now until the end of Section \ref{class7}, we assume that $F$ is local.  If $G$ is an $F$-group, we denote $C^\infty_0(G)$ by $\cH(G)$.  We denote $C^\infty_0( \widetilde{G}(N))$ by $\tcH(N)$.  If $G$ is a connected reductive group over $F$ and $(G', \xi') \in \cE(G)$, there is a correspondence between $\cH(G)$ and $\cH(G')$ known as the endoscopic transfer.  More precisely, there is a nonempty subset of $\cH(G')$ associated to any $f \in \cH(G)$, and we let $f^{(G',\xi')}$ (which we will often abbreviate to $f^{G'}$) denote a choice of function from it.  We say that $f^{(G',\xi')}$ is an endoscopic transfer of $f$ to $G'$.  The transfer is defined using orbital integrals on $G$ and $G'$ in a way that we do not need to make explicit in this paper.  Its construction is primarily due to Shelstad in the real case, and Waldspurger \cite{Wa3} in the $p$-adic case (assuming the fundamental lemma).  See \cite[Section 2.1]{A} for more details.

We shall require the fundamental lemma, due to Laumon and Ng\^o \cite{LN,N}, Hales \cite{Ha}, Waldspurger \cite{Wa2}, and others.  This states that if the local field $F$ is $p$-adic, all data are unramified, and $K$ and $K'$ are hyperspecial maximal compact subgroups of $G$ and $G'$, then the characteristic functions $1_K$ and $1_{K'}$ correspond under endoscopic transfer.

There is a similar transfer in the twisted case.  If $(G,\xi) \in \tcE(N)$, this associates a function $f^{(G,\xi)} \in \cH(G)$ to a function $f \in \tcH(N)$.  There is a twisted fundamental lemma, derived by Waldspurger in \cite{Wa} from the untwisted case and his nonstandard variant, which states that the characteristic functions of hyperspecial maximal compact subgroups are associated by transfer if $F$ is $p$-adic and all data are unramified.

\subsection{Local parameters}
\label{class6}

Let $G$ be a connected reductive algebraic group over $F$.  A Langlands parameter for $G$ is an admissible homomorphism

\bes
\phi : L_F \rightarrow {}^LG.
\ees
We let $\Phi(G)$ denote the set of Langlands parameters up to conjugacy by $\widehat{G}$.  An Arthur parameter for $G$ is an admissible homomorphism

\bes
\psi : L_F \times SL(2,\C) \rightarrow {}^LG
\ees
such that the image of $L_F$ in $\widehat{G}$ is bounded.  We let $\Psi(G)$ denote the set of Arthur parameters modulo conjugacy by $\widehat{G}$, and let $\Psi^+(G)$ denote the set of parameters obtained by dropping this boundedness condition.

If $\psi \in \Psi^+(G)$ we define the following groups, which control the character identities for the local Arthur packet associated to $\psi$.

\begin{align*}
S_\psi & = \text{Cent}( \text{Im} \psi, \widehat{G} ), \\
\overline{S}_\psi & = S_\psi / Z(\widehat{G})^{\Gamma_F}, \\
\cS_\psi & = \pi_0( \overline{S}_\psi).
\end{align*}
In all cases we consider, we will have $\cS_\psi \simeq (\Z / 2\Z)^r$ for some $r$.  We also define

\bes
s_\psi = \psi\left( 1, \left( \begin{array}{cc} -1 & 0 \\ 0 & -1 \end{array} \right) \right),
\ees
which is a central semi-simple element of $S_\psi$.

\subsubsection{Endoscopic data associated to Arthur parameters}
\label{class61}

There is a correspondence between pairs $(G',\psi')$ with $G' \in \cE(G)$ and $\psi' \in \Psi(G')$, and pairs $(\psi,s)$ with $\psi \in \Psi(G)$ and $s$ a semi-simple element of $\overline{S}_\psi$.  (Note that we place a stronger equivalence relation on $G'$ here than the usual equivalence of endoscopic data; see \cite[Section 3.2]{Mo} for details.)  In one direction, this correspondence associates to a pair $(G',\psi')$ (where $G'$ is an abbreviation of $(G',s',\xi')$) the pair $(\psi,s)$, where $\psi = \xi' \circ \psi'$ and $s$ is the image of $s'$ in $\overline{S}_\psi = S_\psi / Z(\widehat{G})^{\Gamma_F}$.

Conversely, suppose we have a pair $(\psi,s)$.  Let $s'$ be any lift of $s$ to $S_\psi$.  We set $\widehat{G}' = \text{Cent}(s',\widehat{G})^0$.  Because $\psi(W_F)$ commutes with $s'$ it normalises $\widehat{G}'$, and this action allows us to define an $L$-group ${}^L G'$.  We may combine $\psi|_{W_F}$ and the inclusion $\widehat{G}' \subset \widehat{G}$ to obtain an $L$-embedding $\xi' : {}^L G' \to {}^L G$, which gives an endoscopic datum $(G',s',\xi')$.  Because $\psi$ factors through $\xi'({}^L G')$, this gives an $L$-parameter $\psi' \in \Psi(G')$.

\subsubsection{Base change maps}
\label{class62}

We now discuss the map from parameters of $U(N)$ to parameters of $G(N)$ given by $\xi_\pm$.  We first note that there is an isomorphism $\Phi(G(N)) \simeq \Phi(GL(N,E))$, which is given explicitly in \cite[Section 2.2]{Mo}, and corresponds to the fact that both sets parametrize representations of $G(N)(F) \simeq GL(N,E)$.  If $\phi \in \Phi(U(N))$, the parameter in $\Phi(GL(N,E))$ corresponding to $\xi_\pm \circ \phi$ under this isomorphism is just $\phi|_{L_E} \otimes \chi_\pm$.  In particular, in the case of $\xi_+$ the parameter is just obtained by restriction to $L_E$ (this is usually known as the standard base change map).

\subsubsection{Parities of local parameters}
\label{class63}

One may characterise the image of $\Phi(U(N))$ in $\Phi(G(N))$ under $\xi_\pm$.  We say that an admissible homomorphism $\rho : L_E \to GL(N,\C)$ is conjugate self-dual if $\rho^c \simeq \rho^\vee$, where $\rho^c(\sigma) = \rho( w_c^{-1} \sigma w_c)$ for $\sigma \in L_E$ and $w_c \in W_F \setminus W_E$.  There is a notion of parity for a conjugate self-dual representation \cite[Section 2.2]{Mo}, which is analogous to a self-dual representation being either orthogonal (even) or symplectic (odd).  We have the following characterisation of the image of $\xi_\pm$ on parameters.

\begin{lemma}

For $\kappa = \pm 1$, the image of

\bes
\xi_\kappa : \Phi(U(N)) \to \Phi(G(N)) \simeq \Phi(GL(N,E))
\ees
is given by the parameters in $\Phi(GL(N,E))$ that are conjugate self-dual with parity $\kappa (-1)^N$.

\end{lemma}

\subsection{Local Arthur packets}
\label{class7}

In Section 2.5, Theorem 2.5.1, and Theorem 3.2.1 of \cite{Mo}, Mok associates a packet $\Pi_\psi$ of representations of $U(N)$ to any $\psi \in \Psi^+(U(N))$.  We recall some of the key features of this construction in the case when $\psi \in \Psi(U(N))$, which is all we shall need in this paper.  The first step is to associate to any $\psi^N \in \widetilde{\Psi}(N)$ an irreducible unitary representation of $G(N)$, denoted $\pi_{\psi^N}$.  We have the Langlands parameter $\phi_{\psi^N}$ associated to $\psi^N$, given by

\be
\label{atol}
\phi_{\psi^N}(\sigma) = \psi^N \left( \sigma, \left( \begin{array}{cc} |\sigma|^{1/2} & 0 \\ 0 & |\sigma|^{-1/2} \end{array} \right) \right), \quad \sigma \in L_F.
\ee
Let $\rho_{\psi^N}$ be the standard representation of $G(N)$ associated to $\phi_{\psi^N}$, and let $\pi_{\psi^N}$ be its Langlands quotient.  $\pi_{\psi^N}$ is an irreducible admissible conjugate self-dual representation of $G(N) \simeq GL(N,E)$, and in \cite[Section 3.2]{Mo} Mok defines a canonical extension of $\pi_{\psi^N}$ to $\widetilde{G}(N)^+$, denoted $\widetilde{\pi}_{\psi^N}$.  Mok defines a linear form on $\widetilde{\cH}(N)$ by

\begin{align*}
\widetilde{f} & \mapsto \widetilde{f}^N(\psi^N), \quad \widetilde{f} \in \widetilde{\cH}(N) \\
\widetilde{f}^N(\psi^N) & = \tr \, \widetilde{\pi}_{\psi^N}(\widetilde{f}).
\end{align*}
If $G \in \cE(U(N))$ and $\psi \in \Psi(G)$, Mok defines a linear form

\be
\label{linform}
f \mapsto f^{G}(\psi), \quad f \in \cH(G).
\ee
In the case $G = U(N)$, Mok characterizes $f^G(\psi)$ as a transfer of the linear form $\widetilde{f}^N(\xi \circ \psi)$ for $\xi = \xi_\pm$.

\begin{prop}[Theorem 3.2.1(a) of \cite{Mo}]
\label{charGL}

Let $G = U(N)$, and let $\psi \in \Psi(G)$.  For either of the embeddings $\xi_\pm$, we have

\bes
\widetilde{f}^{G}(\psi) = \widetilde{f}^N(\xi_\pm \circ \psi), \quad \widetilde{f} \in \widetilde{\cH}(N),
\ees
where $\widetilde{f}^{G}(\psi)$ denotes the evaluation of the linear form $f^{G}(\psi)$ on the transfer of $\widetilde{f}$ to $\cH(G)$ associated to $\xi_\pm$.  

\end{prop}

Proposition in fact gives a definition of $f^G(\psi)$ when $G = U(N)$, because both transfer mappings $\tcH(N) \to \cH(U(N))$ associated to $\xi_\pm$ are surjective by \cite[Prop 3.1.1(b)]{Mo}.  As a general $G \in \cE(U(N))$ is a product of the groups $U(M)$ and $G(M)$, and the definition of $f^G(\psi)$ is easy for $G(M)$ because it is a general linear group, this can be used to define $f^G(\psi)$ for all $G$.  We will only need to consider the case where $G$ is a product of two unitary groups in this paper.

We shall use the following character identities, which relate the linear forms $f^G(\psi)$ to traces of irreducible representations of $U(N)$.

\begin{prop}[Theorem 3.2.1(b) of \cite{Mo}]
\label{charid}

Let $\psi \in \Psi(U(N))$.  There exists a finite multi-set $\Pi_\psi$ whose elements are irreducible admissible representations of $U(N)$, and a mapping

\begin{align*}
\Pi_\psi & \to \widehat{\mathcal{S}}_\psi \\
\pi & \mapsto \langle \cdot, \pi \rangle
\end{align*}
with the following property.  If $s \in S_\psi$, and $(G', \psi')$ is the element of $\cE(U(N))$ corresponding to $(\psi,s)$ as in Section \ref{class61}, then we have

\bes
f^{G'}(\psi') = \sum_{\pi \in \Pi_\psi} \langle s_\psi s, \pi \rangle \textup{tr} \, \pi(f), \quad f \in \cH(U(N)).
\ees
Here we have identified $s_\psi s$ with its image in $\cS_\psi$, and $f^{G'}(\psi')$ denotes the evaluation of the linear form $f^{G'}(\psi')$ on the transfer of $f$ to $\cH(G')$.

\end{prop}

The multiset $\Pi_\psi$ is referred to as the Arthur packet associated to $\psi$.  Note that if we set $s = 1$ in Proposition \ref{charid}, then we obtain an expression for $f^{U(N)}(\psi)$ in terms of traces of the representations in $\Pi_\psi$.  Because we always have $s_\psi^2 = 1$, if we set $s = s_\psi$ we obtain

\bes
f^{G'}(\psi') = \sum_{\pi \in \Pi_\psi}  \textup{tr} \, \pi(f), \quad f \in \cH(U(N)).
\ees
We will use this to bound $\sum_{\pi \in \Pi_\psi} \dim \pi^{K}$ for various compact open subgroups $K$ of $U(N)(F)$.

\subsection{Global parameters}
\label{class8}

We now discuss the global version of the constructions of Sections \ref{class6} and \ref{class7}. For the rest of Section \ref{class} we assume that $F$ is global.  The main difficulty in adapting these constructions is that we do not have a global analogue of the Langlands group $L_F$.  However, if $L_F$ existed, its irreducible $N$-dimensional representations would correspond to cusp forms on $GL_N$.  Therefore, instead of considering representations of $L_F \times SL(2,\C)$, Mok considers formal linear combinations of products of $GL_N$ cusp forms with representations of $SL(2,\C)$, and parametrizes the spectrum of $U(N)$ using these.

For $n \ge 1$, we let $\nu(n)$ denote the unique irreducible (complex-) algebraic representation of $SL(2,\C)$ of dimension $n$.  We let $\Psi_\text{sim}(N)$ denote the set of simple global Arthur parameters, which are formal expressions $\psi^N = \mu \boxtimes \nu$ where $\mu$ is a unitary cuspidal automorphic representation of $GL(m,\A_E)$ and $\nu = \nu(n)$ for some $n$, and $N = mn$.  We let $\Psi(N)$ denote the set of global Arthur parameters, which are formal expressions

\bes
\psi^N = \psi_1^{N_1} \boxplus \dots \boxplus \psi_r^{N_r}
\ees
with $\psi_i^{N_i} \in \Psi_\text{sim}(N_i)$ and $N_1 + \dots + N_r = N$.  If $\psi^N = \mu \boxtimes \nu \in \Psi_\text{sim}(N)$, we define its conjugate dual to be $\psi^{N,*} = \mu^* \boxtimes \nu$, where $\mu^*$ is the conjugate dual representation to $\mu$, and say that $\psi^N$ is conjugate self-dual if $\psi^N = \psi^{N,*}$.  We denote the set of conjugate self-dual parameters in $\Psi_\text{sim}(N)$ by $\widetilde{\Psi}_\text{sim}(N)$.  We extend these notions to $\Psi(N)$ by defining the conjugate dual of

\bes
\psi^N = \psi_1^{N_1} \boxplus \dots \boxplus \psi_r^{N_r} \in \Psi(N)
\ees
to be

\bes
\psi^{N,*} = \psi_1^{N_1,*} \boxplus \dots \boxplus \psi_r^{N_r,*}.
\ees
We denote the set of conjugate self-dual parameters in $\Psi(N)$ by $\widetilde{\Psi}(N)$.  Note that requiring $\psi^N \in \Psi(N)$ to be conjugate self-dual is not the same as requiring that $\psi_i^{N_i} = \psi_i^{N_i,*}$ for all $i$, as we are free to rearrange the terms.  We say that

\bes
\psi^N = \psi_1^{N_1} \boxplus \dots \boxplus \psi_r^{N_r} \in \widetilde{\Psi}(N)
\ees
is elliptic if the $\psi_i^{N_i}$ are distinct and $\psi_i^{N_i} = \psi_i^{N_i,*}$ for all $i$, and denote the set of elliptic parameters by $\widetilde{\Psi}_\text{ell}(N)$.  We denote the set of generic parameters, that is those for which all the representations $\nu$ are trivial, by $\Phi(N)$, and define $\widetilde{\Phi}_*(N) = \widetilde{\Psi}_*(N) \cap \Phi(N)$.  It follows that we have chains of parameters

\begin{align*}
\widetilde{\Psi}_\text{sim}(N) & \subseteq \widetilde{\Psi}_\text{ell}(N) \subseteq \widetilde{\Psi}(N), \quad \text{and} \\
\widetilde{\Phi}_\text{sim}(N) & \subseteq \widetilde{\Phi}_\text{ell}(N) \subseteq \widetilde{\Phi}(N).
\end{align*}

To any parameter $\psi^N \in \widetilde{\Psi}(N)$, Mok \cite[Section 2.4]{Mo} associates a group $\cL_{\psi^N}$ that is an extension of $W_F$ by a complex algebraic group, and an $L$-homomorphism $\tpsi^N : \cL_{\psi^N} \times SL(2,\C) \rightarrow {}^LG(N)$.  We will not recall the definition of these objects, and give a qualitative description of them instead.  If we think of $\psi^N$ as corresponding to a hypothetical representation of $L_F \times SL(2,\C)$, $\cL_{\psi^N}$ would contain the image of this representation.  Because of this, we will use $\cL_{\psi^N}$ and $\tpsi^N$ to define what it means for $\psi^N$ to factor through the maps $\xi_\pm : {}^L U(N) \to {}^L G(N)$, and thus give a parameter for $U(N)$.

If $(U(N), \xi_\pm) \in \tcE_\text{sim}(N)$, we define $\Psi(U(N), \xi_\pm)$ to be the set of pairs $\psi = (\psi^N, \tpsi)$, where $\psi^N \in \tPsi(N)$ and

\bes
\tpsi : \cL_{\psi^N} \times SL(2,\C) \rightarrow {}^LU(N)
\ees
is an $L$-homomorphism such that $\tpsi^N = \xi_\pm \circ \tpsi$.  If $\psi = (\psi^N, \tpsi) \in \Psi(U(N), \xi_\pm)$, we set $\cL_\psi = \cL_{\psi^N}$.

\subsubsection{Parities of global parameters}

If $\phi^N \in \tPhi_\text{sim}(N)$ is associated to a conjugate self-dual cusp form $\mu$, Theorem 2.4.2 of \cite{Mo} states that there is a unique base change map $\xi_\kappa$ with $\kappa = \pm$ such that $\mu$ is the weak base change of a representation of $U(N)$ under $\xi_\kappa$.  Following Mok, we refer to $\kappa (-1)^{N-1}$ as the parity of $\phi^N$ and $\mu$.  We may extend this definition to $\psi^N = \mu \boxtimes \nu \in \widetilde{\Psi}_\text{sim}(N)$ as follows: if we assume that $\mu$ is a base change under $\xi_\delta$, we define $\kappa = \delta (-1)^{N - m - n +1}$, and define $\kappa (-1)^{N-1}$ to be the parity of $\psi^N$.  It follows from these definitions that the parity of $\mu \boxtimes \nu$ is the product of the parities of $\mu$ and $\nu$, where the parity of $\nu(n)$ is defined to be opposite to the parity of $n$ (corresponding to the fact that $\nu(n)$ is orthogonal if $n$ is odd and symplectic if $n$ is even).

This is compatible with the notion of parity discussed in Section \ref{class63}.  In particular, if $\psi \in \widetilde{\Psi}_\text{sim}(N)$ has invariant $\kappa$, then the $L$-homomorphism $\cL_\psi \times SL(2,\C) \to {}^L G(N)$ factors through $\xi_\kappa$.  In particular, if $\psi^N \in \widetilde{\Phi}_\text{sim}(N)$ then $\cL_{\psi^N} = {}^L U(N)$ and $\tpsi^N$ is the product of $\xi_\kappa$ with the trivial map on $SL(2,\C)$.  We will also see in Section \ref{class9} that if $v$ is nonsplit in $E$, the localisation $\psi_v^N : L_{F_v} \times SL(2,\C) \to {}^L G_{E_v/F_v}(N)$ of $\psi^N$ factors through the local base change map $\xi_{\pm,v}$.

\subsubsection{Square-integrable parameters}

We define $\Psi_2(U(N), \xi_\pm)$ to be the subset of $\Psi(U(N), \xi_\pm)$ for which $\psi^N \in \tPsi_\text{ell}(N)$.  This is known as the set of square-integrable parameters of $U(N)$ with respect to $\xi_\pm$, because these are the parameters that give the discrete automorphic spectrum of $U(N)$.  In concrete terms, a parameter $\psi^N \in \tPsi_\text{ell}(N)$ can be extended to $\psi = (\psi^N, \tpsi) \in \Psi_2(U(N), \xi_\kappa)$ if and only if $\psi^N = \psi_1^{N_1} \boxplus \ldots \boxplus \psi_l^{N_l}$ with the parameters $\psi_i^{N_i} \in \widetilde{\Psi}_\text{sim}(N_i)$ all having parity $\kappa (-1)^{N-1}$.  More concretely, if $\psi_i^{N_i} = \phi_i \boxtimes \nu(n_i)$ with $\phi_i \in \widetilde{\Phi}_\text{sim}(m_i)$, we require that $\delta_i (-1)^{m_i + n_i} = \kappa (-1)^N$ for all $i$, where $\delta_i$ is such that the cusp form $\mu_i$ associated to $\phi_i$ is a weak base change from $U(m_i)$ under $\xi_{\delta_i}$.

\subsection{Localisation of parameters}
\label{class9}

Having introduced global and local versions of our parameters, we now discuss the localisation maps taking the former to the latter.  We let $v$ be a place of $F$, and let $E_v = E \otimes_F F_v$, $U(N)_v = U_{E_v/F_v}$, and $G(N)_v = G_{E_v/F_v}(N)$.

We first assume that $v$ does not split in $E$.    Consider a simple generic parameter $\phi^N \in \Phi_\text{sim}(N)$.  As $\phi^N$ corresponds to a cusp form $\mu$ on $GL(N,\A_E)$, we may consider the local factor $\mu_v$, which is an irreducible unitary representation of $GL(N,E_v)$.  By the local Langlands correspondence for $GL(N)$ by Harris-Taylor \cite{HT} and Henniart \cite{H}, and the isomorphism $\Phi(GL(N,E_v)) \simeq \Phi(G(N)_v)$ of Section \ref{class62}, $\mu_v$ corresponds to a local Langlands parameter $\phi_v^N \in \Phi_v(N) := \Phi(G(N)_v)$.  This gives the localisation map from $\Phi_\text{sim}(N)$ to $\Phi_v(N)$, which takes $\widetilde{\Phi}_\text{sim}(N)$ to $\widetilde{\Phi}_v(N)$.  This may be naturally extended to a map $\psi^N \mapsto \psi^N_v$ from $\Psi(N)$ to $\Psi^+_v(N)$ that takes $\widetilde{\Psi}(N)$ to $\widetilde{\Psi}^+_v(N)$.

Now consider a parameter $\psi = (\psi^N, \tpsi) \in \Psi(U(N), \xi_\kappa)$.  By \cite[Corollary 2.4.11]{Mo}, the localisation $\psi_v^N$ factors through the embedding $\xi_{\kappa,v} : {}^L U(N)_v \to {}^L G(N)_v$.  This allows us to define $\psi_v \in \Psi(U(N)_v)$ by requiring that $\xi_{\kappa,v} \circ \psi_v = \psi_v^N$.

We now assume that $v$ splits in $E$, and write $v = w \overline{w}$.  As in Section \ref{class2} we have isomorphisms $\iota_w : U(N)_v \to GL(N,E_w)$ and $\iota_{\overline{w}} : U(N)_v \to GL(N,E_{\overline{w}})$ corresponding to the projections of $E_v$ to $E_w$ and $E_{\overline{w}}$.  If $\psi = (\psi^N, \tpsi) \in \Psi(U(N), \xi_\kappa)$, we may think of the localisations $\psi_w^N$ and $\psi_{\overline{w}}^N$ as elements of $\Psi^+(GL(N,E_w))$ and $\Psi^+(GL(N,E_{\overline{w}}))$.  When $\psi_w^N \in \Psi(GL(N,E_w))$, we may define $\pi_{\psi_w^N}$ to be the representation associated to $\phi_{\psi_w^N}$ by local Langlands, where $\phi_{\psi_w^N}$ is as in (\ref{atol}).  The definition of $\pi_{\psi_w^N}$ for $\psi_w^N \in \Psi^+(GL(N,E_w))$ is given in \cite[Section 2.4]{Mo}, and will not be needed in this paper because the $GL_N$ cusp forms we consider are known to satisfy the Ramanujan conjectures.

The conjugate self-duality of $\psi^N$ implies that $\pi_{\psi_w^N} = (\pi_{\psi_{\overline{w}}^N})^\vee$, and $\iota_w \circ \iota_{\overline{w}}^{-1}$ is the automorphism $g \mapsto J {}^t g^{-1} J^{-1}$ of Section \ref{class2} (under the identification $E_w = E_{\overline{w}} = F_v$).  Therefore the pullback of $\pi_{\psi_w^N}$ via $\iota_w$ is isomorphic to the pullback of $\pi_{\psi_{\overline{w}}^N}$ via $\iota_{\overline{w}}$.  We denote this representation of $U(N)_v$ by $\pi_{\psi_v}$.  We define $\psi_v \in \Psi^+(U(N)_v)$ to be the parameter obtained by composing $\psi_w^N : L_{F_v} \times SL(2,\C) \simeq L_{E_w} \times SL(2,\C) \to {}^L GL(N,E_w)$ with the isomorphism ${}^L \iota_w : {}^L GL(N,E_w) \to {}^L U(N)_v$ induced by $\iota_w$.  We define $\Pi_{\psi_v} = \{ \pi_{\psi_v} \}$ to be the local Arthur packet associated to $\psi_v$.

\subsection{The global classification}
\label{class10}

We may now state the global classification theorem.  For any $\psi$ in the set of global parameters $\Psi_2(U(N),\xi_\pm)$, we have the localisations $\psi_v$ and the local Arthur packets $\Pi_{\psi_v}$ associated to $\psi_v$ in Sections \ref{class7} and \ref{class9}.  We define the global Arthur packet $\Pi_\psi$ to be the restricted direct product of the $\Pi_{\psi_v}$, in the sense that it contains those $\otimes_v \pi_v \in \otimes_v \Pi_{\psi_v}$ such that the (global analogue of the) character $\langle \cdot, \pi_v \rangle$ is trivial for almost all $v$.  We will write $\Pi_\psi = \otimes_v \Pi_{\psi_v}$ by slight abuse of notation.  In \cite[Section 2.5]{Mo}, Mok defines a subset $\Pi_\psi(\epsilon_\psi) \subset \Pi_\psi$ in terms of symplectic root numbers and the pairings in Proposition \ref{charid}, which we do not need to make explicit.  The classification is as follows.

\begin{theorem}
\label{UNclass}

For $\kappa = \pm 1$, we have a $U(N)(\A)$-module decomposition of the discrete automorphic spectrum of $U(N)$:

\bes
L^2_\textup{disc}(U(N)(F) \backslash U(N)(\A)) = \sum_{\psi \in \Psi_2(U(N),\xi_\kappa)} \sum_{\pi \in \Pi_\psi(\epsilon_\psi)} \pi.
\ees

\end{theorem}

Mok's proof of Theorem \ref{UNclass} builds on work by many authors, notably Arthur, who classified the discrete spectrum of quasi-split symplectic and orthogonal groups in \cite{A}, and Moeglin and Waldspurger, who proved the stabilization of the twisted trace formula.  Theorem \ref{UNclass} is being extended to general forms of unitary groups by Kaletha, Minguez, Shin, and White in \cite{KMSW} and its projected sequels.  In joint work with Shin, we hope to show that this extension of Theorem \ref{UNclass} implies strong (and conjecturally sharp) upper bounds for cohomology growth on arithmetic manifolds associated to $U(n,1)$ for any $n$.

\section{Application of the Global Classification}

In this section, we rephrase Theorem \ref{simplemainthm} in terms of Arthur packets by applying the results of Section \ref{class} to the manifolds $Y(n)$.

\subsection{Notation}

Let $E$ be an imaginary quadratic field with ring of integers $\cO$.  We apply the notation of Section \ref{class} to the extension $E/\Q$.  We denote places of $\Q$ and $E$ by $v$ and $w$ respectively.  We recall the character $\chi$ of $E^\times \backslash \A_E^\times$ whose restriction to $\A^\times$ is the character associated to $E/\Q$ by class field theory.  We let $S_f$ be a finite set of finite places of $\Q$ that contains all finite places at which $E$ is ramified, and all finite places that are divisible by a place of $E$ at which $\chi$ is ramified.

If $G$ is an algebraic group over $\Q$ or $\Q_v$, we denote $G(\Q_v)$ by $G_v$, and likewise for groups over $E$.  For any $N \ge 1$ we let $\widetilde{G}(N)_v = G(N)_v \rtimes \theta$, and $\tcH_v(N) = C^\infty_0( \widetilde{G}(N)_v)$.  We fix Haar measures on $U(N)_v$ and $\widetilde{G}(N)_v$ for all $N \ge 1$ and all $v$, subject to the condition that these measures assign volume 1 to a hyperspecial maximal compact when $v$ is finite and the groups are unramified.  All traces and twisted traces will be defined with respect to these measures.

We shall identify the infinitesimal character of an irreducible admissible representation of $U(N)_\infty$ and $GL(N,\C)$ with a point in $\C^N / S_N$ and $(\C^N / S_N) \times (\C^N / S_N)$ respectively, where $S_N$ is the symmetric group.

We choose a compact open subgroup $K = \prod_p K_p \subset U(4)(\A_f)$, subject to the condition that $K_p = U(4)(\Z_p)$ for $p \notin S_f$.  For any $n \ge 1$ that is relatively prime to $S_f$, we define $K_p(n)$ to be the subgroup of $K_p$ consisting of elements congruent to $1$ modulo $n$ when $p \notin S_f$, and $K_p(n) = K_p$ otherwise, and define $K(n) = \prod_p K_p(n)$.

We let $K_\infty$ be the standard maximal compact subgroup of $U(4)_\infty$.  For any $n \ge 1$ that is relatively prime to $S_f$, we define $Y(n) = U(4)(\Q) \backslash U(4)(\A) /  K_\infty K(n)$.  For any $0 \le i \le 8$, we let $h^i_{(2)}(Y(n))$ denote the dimension of the space of square integrable harmonic $i$-forms on $Y(n)$.

\subsection{Reduction of Theorem \ref{simplemainthm} to Arthur packets}

The precise form of Theorem \ref{simplemainthm} we shall prove is the following.

\begin{theorem}
\label{mainthm}

If $i = 2, 3$, and $n$ is relatively prime to $S_f$ and divisible only by primes that split in $E$, we have $h^i_{(2)}(Y(n)) \ll n^9$.

\end{theorem}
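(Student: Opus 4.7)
The plan is to combine the noncompact Matsushima formula of Borel--Casselman with Mok's endoscopic classification to write
\bes
h^i_{(2)}(Y(n)) = \sum_\pi m_{\text{disc}}(\pi) \dim \pi_f^{K(n)} \dim H^i(\g, K_\infty; \pi_\infty),
\ees
a sum over irreducible $\pi \subset L^2_{\text{disc}}(U(4)(\Q) \backslash U(4)(\A))$. Since the middle degree of the eight-dimensional $Y(n)$ is $4$, degrees $i = 2, 3$ receive contributions only from cohomological non-tempered $\pi_\infty$. By Mok's classification, each such $\pi$ lies in a packet $\Pi_\psi$ for some $\psi \in \Psi_2(U(4), \xi_\pm)$, reducing the task to bounding the total contribution of each elliptic global parameter $\psi^4 \in \tPsi_\text{ell}(4)$.

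Enumerating $\tPsi_\text{ell}(4)$, I discard the generic parameters (which contribute only to $i = 4$) and those for which Adams--Johnson forces the $(\g, K_\infty)$-cohomology in degrees $2$ and $3$ to vanish. What remains are the simple Speh-type parameters $\mu \boxtimes \nu(2)$ and $\chi \boxtimes \nu(4)$, the two-block parameters of shape $(3,1)$ or $(2,2)$ with at least one non-generic factor, and the three- or four-block elliptic parameters containing at least one $\nu(n)$ factor with $n \ge 2$. As noted in the introduction, the dominant contribution for $i = 2$ comes from $\mu \boxtimes \nu(2)$ with $\mu \in \tPhi_\text{sim}(2)$, while for $i = 3$ it comes from parameters containing a single $\nu(2)$ building block.

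For each surviving parameter type I bound the contribution place by place. At infinity, $\dim H^i(\g, K_\infty; \pi_\infty)$ is a bounded constant once the infinitesimal character is fixed by $\psi_\infty$. At a split prime $p$ the local group is $GL_4(\Q_p)$ and $\Pi_{\psi_p}$ is an explicit singleton; I bound $\dim \Pi_{\psi_p}^{K_p(p^a)}$ directly from the realization of the relevant Langlands quotient as a subquotient of a parabolic induction, exploiting the fact that Speh representations carry strictly fewer $K(p^a)$-fixed vectors than tempered representations of $GL_4(\Q_p)$. At nonsplit primes outside $S_f$ the local component is spherical and I control it via its Satake parameter. At nonsplit primes in $S_f$ we are at full level $K_p$; there I write $\dim \Pi_{\psi_p}^{K_p}$ as a trace, apply the twisted character identities of Theorem 3.2.1 of \cite{Mo}, and transfer the test functions from $\widetilde{G}(4)_p$ using the twisted fundamental lemma for principal congruence subgroups at full level.

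To finish, I sum over parameters. The number of simple cuspidal $\mu$ on $GL_m/E$ of bounded infinitesimal character and fixed ramification outside $S_f$ is controlled by a Weyl-law count for $GL_m$; combined with the local dimension bounds above and the Adams--Johnson identification of degrees, this yields $h^i_{(2)}(Y(n)) \ll n^9$ for $i = 2, 3$. The main obstacle is obtaining a sufficiently sharp bound on $\dim \text{Speh}(\sigma_p, 2)^{K_p(p^a)}$ at split primes for tempered $\sigma_p$ on $GL_2(\Q_p)$: this is what governs the contribution of the parameters $\mu \boxtimes \nu(2)$, and is precisely the gap that we expect prevents us from attaining the conjecturally sharp exponent $V(n)^{2/5 + \epsilon}$ in the case $i = 2$.
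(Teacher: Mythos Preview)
Your plan matches the paper's proof: Matsushima plus Mok's classification to reduce to the non-generic elliptic parameters (the paper's four shapes (a)--(d)), local bounds via the explicit singleton packet at split primes and character identities at nonsplit primes, then summation by limit multiplicity on the smaller group (the paper descends the bound to packets on $U(2)$ and sums there, rather than counting conjugate self-dual cusp forms on $GL_m/E$ directly; by Mok these are equivalent).

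One inversion to correct: you have the roles of the twisted fundamental lemma and general transfer swapped. The twisted fundamental lemma is what is used at the nonsplit $p \notin S_f$, where $K_p$ is hyperspecial and one must know that the transfer of $1_{K_p}$ is exactly $1_{\widetilde{K}_w \rtimes \theta}$; this, together with the character identity of Mok's Theorem 3.2.1, is how the paper obtains the exact factorizations in Lemmas~\ref{paralocalnonsplit} and~\ref{parbnonsplit}, and it replaces your Satake-parameter step. At $p \in S_f$ the group may be ramified and $K_p$ is not hyperspecial, so no fundamental lemma is available; there the paper only uses the \emph{existence} of a transfer $\widetilde{1}_{K_p}$, and the lack of control over it is absorbed into an implied constant depending on $K$ (Lemma~\ref{parbSf} and its analogue in \S\ref{secpara}). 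Your invocation of Adams--Johnson is also unnecessary: the paper only needs that cohomological $\pi_\infty$ in degrees $2,3$ force the parameter to be non-generic (Proposition 13.4 of \cite{BMM}), and then bounds every non-generic shape.
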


The implied constant depends only on $K$, and we shall ignore the dependence of implied constants on $K$ for the rest of the paper.  By considering the action of the center on the connected components of $Y(n)$, Theorem \ref{mainthm} implies that the connected component $Y^0(n)$ of the identity satisfies $h^i_{(2)}(Y^0(n)) \ll_\epsilon n^{8 + \epsilon}$.  This implies Theorem \ref{simplemainthm} when combined with the asymptotic $\text{Vol}(Y^0(n)) = n^{15 + o(1)}$.

We shall only prove Theorem \ref{mainthm} in the case $i = 3$, as the case $i = 2$ is identical.  We begin by applying the extension of Matsushima's formula to noncompact quotients \cite[Prop 5.6]{BG}, which gives

\be
\label{matsushima}
h^3_{(2)}(Y(n)) = \sum_{\pi \in L^2_\text{disc}(U(4)(\Q) \backslash U(4)(\A))} h^3(\g,K;\pi_\infty) \dim \pi_f^{K(n)}.
\ee
If we combine this with Theorem \ref{UNclass}, we obtain

\be
\label{U4class}
h^3_{(2)}(Y(n)) \le \sum_{\psi \in \Psi_2(U(4), \xi_+) } \sum_{\pi \in \Pi_\psi} h^3(\g,K;\pi_\infty) \dim \pi_f^{K(n)}.
\ee
It follows from the proof of the Adams-Johnson conjectures in \cite{AMR}, or Proposition 13.4 of \cite{BMM}, that if $\pi \in \Pi_\psi$ satisfies $h^3(\g,K;\pi_\infty) \neq 0$, then $\psi$ is not generic.  It follows that $\psi^N$ must be of one of the following types.

\begin{enumerate}[(a)]

\item
\label{para}
$\nu(2) \boxtimes \phi^N_1 \boxplus \phi^N_2$, $\phi^N_i \in \widetilde{\Phi}_\text{ell}(i)$.

\item
\label{parb}
$\nu(2) \boxtimes \phi^N$, $\phi^N \in \widetilde{\Phi}_\text{ell}(2)$.

\item
\label{parc}
$\nu(3) \boxtimes \phi^N_1 \boxplus \phi^N_2$, $\phi^N_i \in \widetilde{\Phi}(1)$.

\item
\label{pard}
$\nu(4) \boxtimes \phi^N$,  $\phi^N \in \widetilde{\Phi}(1)$.

\end{enumerate}

We bound the contribution of parameters of types (\ref{para}) and (\ref{parb}) in $\mathsection$\ref{secpara} and $\mathsection$\ref{secparb} respectively.  It follows from the description of the packets $\Pi_\psi$ at split places that all representations contained in packets of type (\ref{pard}) must be characters, and these make a contribution of $\ll_\epsilon n^{1 + \epsilon}$ to $h^3_{(2)}(Y(n))$.  We shall also omit the case of parameters of type (\ref{parc}); it may be proven that they make a contribution of $\ll_\epsilon n^{5 + \epsilon}$ using the same methods as in $\mathsection$\ref{secparb}.

\section{The case $\psi^N = \nu(2) \boxtimes \phi^N_1 \boxplus \phi^N_2$}
\label{secpara}

Let $h^3_{(2)}(Y(n))^\star$ denote the contribution to $h^3_{(2)}(Y(n))$ from parameters of the form $\nu(2) \boxtimes \phi_1^N \boxplus \phi_2^N$, which by (\ref{U4class}) satisfies

\be
\label{U4class1}
h^3_{(2)}(Y(n))^\star \le \sum_{ \substack{ \psi \in \Psi_2(U(4), \xi_+) \\ \psi^N = \nu(2) \boxtimes \phi^N_1 \boxplus \phi^N_2 }} \sum_{\pi \in \Pi_\psi} h^3(\g,K;\pi_\infty) \dim \pi_f^{K(n)}.
\ee
We assume that the sum is restricted to those $\phi_2^N$ lying in $\tPhi_\text{sim}(2)$ until the end of Section \ref{secparasum}, and describe how to treat composite $\phi_2^N$ in $\mathsection$\ref{secparacomposite}.  We note that $\psi \in \Psi_2(U(4), \xi_+)$ implies that $\phi_1^N$ and $\phi_2^N$ must be even and odd respectively.  The main result of this section is the following.

\begin{prop}
\label{paraprop}

We have the bound $h^3_{(2)}(Y(n))^\star \ll n^9$.

\end{prop}

For $i = 1, 2$, we let $K_i = \prod_p K_{i,p}$ be a compact open subgroup of $U(i)(\A_f)$ such that $K_{i,p} = U(i)(\Z_p)$ for all $p \notin S_f$, and let $\widetilde{K}_i = \prod_w \widetilde{K}_{i,w}$ be a compact open subgroup of $GL(i,\A_{E,f})$ such that $\widetilde{K}_{i,w} = GL(i,\cO_w)$ for all $w|p \notin S_f$.  We define $\widetilde{K} \subset GL(4,\A_{E,f})$ in a similar way.  The groups $K_{2,p}$ and $\widetilde{K}_{1,w}$ for $w|p \in S_f$ will be specified in the proof of Proposition \ref{paralocal}, and the groups $K_{1,p}$, $\widetilde{K}_{2,w}$, and $\widetilde{K}_w$ for $w|p \in S_f$ may be chosen arbitrarily.  We define congruence subgroups $K_*(n)$ of these groups for $n$ relatively prime to $S_f$ in the usual way, and recall that $n$ will only be divisible by primes that split in $E$.

We let $\widetilde{P}$ be the standard parabolic subgroup of $GL(4,E)$ with Levi $\widetilde{L} = GL(2,E) \times GL(2,E)$, and let $P$ be the corresponding standard parabolic subgroup of $U(4)$.

\subsection{Controlling a single parameter}

We first bound the contribution from a single Arthur parameter to $h^3_{(2)}(Y(n))^\star$.  We therefore fix $\phi^N_i \in \tPhi_\text{sim}(i)$ for $i = 1, 2$ with $\phi_1^N$ even and $\phi_2^N$ odd, and let $\psi \in \Psi(U(4), \xi_+)$ be the unique parameter with $\psi^N = \nu(2) \boxtimes \phi^N_1 \boxplus \phi^N_2$.  We let $\phi_i^N$ correspond to a conjugate self-dual cuspidal automorphic representation $\mu_i$ of $GL(i,\A_E)$.  We assume that $\mu_i$ are tempered at all places.  This assumption is not necessary, but simplifies the proof of Proposition \ref{paralocal} and will be proven to hold for all parameters that contribute to cohomology.

We define $\psi^N_1 = \nu(2) \boxtimes \phi_1^N$ and $\psi^N_2 = \phi_2^N$, and for $i = 1,2$ we let $\psi_i \in \Psi(U(2), \xi_+)$ be the corresponding unitary parameters.  We shall prove the following bound for the finite part of the contribution of $\Pi_\psi$ to $h^3_{(2)}(Y(n))^\star$.

\begin{prop}
\label{paralocal}

There is a choice of $\widetilde{K}_{1,w}$ for $w | p$, $p \in S_f$, and $K_{2,p}$ for $p \in S_f$, depending only on $K$, such that

\bes
\sum_{\pi_f \in \Pi_{\psi,f}} \dim \pi_f^{K(n)} \ll [ K : (K \cap P(\A_f)) K(n)] \dim \mu_1^{\widetilde{K}_1(n)} \sum_{\pi_f' \in \Pi_{\psi_2,f} } \dim \pi_f'^{K_2(n)},
\ees
where $\Pi_{\psi,f} = \otimes_p \Pi_{\psi_p}$ is the finite part of $\Pi_\psi$, and likewise for $\Pi_{\psi_2}$.

\end{prop}

The proposition will follow from the factorization of $\Pi_{\psi,f}$, and the series of lemmas below.

\begin{lemma}
\label{paralocalnonsplit}

Let $p \notin S_f$ be nonsplit in $E$, and let $w | p$.  We have

\bes
\sum_{\pi_p \in \Pi_{\psi_p}} \dim \pi_p^{K_p} = \dim \mu_{1,w}^{\widetilde{K}_{1,w}} \sum_{\pi_p' \in \Pi_{\psi_{2,p}} } \dim \pi_p'^{K_{2,p}}.
\ees

\end{lemma}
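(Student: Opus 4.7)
The plan is to apply Mok's endoscopic character identity (Theorem 3.2.1) at $p$ together with the twisted fundamental lemma to reduce both sides of the asserted equality to twisted traces of the attached $GL$-representations against characteristic functions of hyperspecial compact subgroups, and then to check that both reductions give the same value.

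First, I would express $\dim \pi_p^{K_p} = \tr \pi_p(1_{K_p})$ using our volume normalization. At the unramified nonsplit place $p$ the function $1_{\widetilde{K}_p}$ is a twisted transfer of $1_{K_p}$ under the base change embedding $\xi_+$, by the twisted fundamental lemma at the unit element. Mok's character identity then gives
\bes
\sum_{\pi_p \in \Pi_{\psi_p}} \langle \cdot, \pi_p \rangle_{\psi_p}\, \tr \pi_p(1_{K_p}) = \tr \widetilde{\pi}_{\psi^N_p}(1_{\widetilde{K}_p}).
\ees
Since $K_p$ is hyperspecial, at most one member of $\Pi_{\psi_p}$, namely the Langlands quotient attached to the underlying L-parameter $\phi_{\psi_p}$, contributes on the left; and the Arthur character $\langle \cdot, \pi_p \rangle_{\psi_p}$ is trivial on this spherical member by the standard unramified normalization. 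Thus the weighted packet sum collapses to the unweighted one we wish to compute.

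Next I would compute $\tr \widetilde{\pi}_{\psi^N_p}(1_{\widetilde{K}_p})$. The representation $\pi_{\psi^N_p}$ on $GL(4,E_w)$ is the Langlands quotient of the unitary induction of $(\mu_{1,w} \circ \det) \boxtimes \mu_{2,w}$ from the $(2,2)$-parabolic, and is therefore spherical if and only if both $\mu_{1,w}$ and $\mu_{2,w}$ are unramified. In that case $\pi_{\psi^N_p}^{\widetilde{K}_p}$ is one-dimensional and $\theta$ acts on it by $+1$ via Mok's canonical extension (normalized through a Whittaker model), so the twisted trace equals $1$; otherwise the twisted trace vanishes. Hence the left-hand side of the lemma equals the indicator that both $\mu_{1,w}$ and $\mu_{2,w}$ are unramified.

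For the right-hand side, $\dim \mu_{1,w}^{\widetilde{K}_{1,w}}$ is the indicator that $\mu_{1,w}$ is unramified. The same character-identity-plus-twisted-fundamental-lemma argument applied to the simple parameter $\psi_2 = \phi_2^N$ on $U(2)$, whose associated $GL(2,E_w)$-representation is just $\mu_{2,w}$, yields
\bes
\sum_{\pi_p' \in \Pi_{\psi_{2,p}}} \dim \pi_p'^{K_{2,p}} = \tr \widetilde{\pi}_{\phi^N_{2,p}}(1_{\widetilde{K}_{2,p}}) = \mathbf{1}[\mu_{2,w}\text{ unramified}].
\ees
Multiplying the two factors on the right gives the same indicator as on the left. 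The principal obstacle will be the bookkeeping of normalizations: confirming that the twisted fundamental lemma actually identifies $1_{K_p}$ with $1_{\widetilde{K}_p}$ up to the correct transfer factor (this is precisely why the paper restricts to full level at nonsplit places), that Mok's canonical extension acts by $+1$ on the spherical vector, and that the Arthur characters $\langle \cdot, \pi_p \rangle_{\psi_p}$ are trivial on the unramified packet member so that the weighted and unweighted packet sums agree.
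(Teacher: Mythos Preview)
Your argument is correct, but it takes a different route from the paper's. You go directly via twisted base change from $U(4)$ to $GL(4)$ (the identity $x=e$ case of Mok's Theorem~3.2.1(b)), compute the resulting twisted trace on the spherical line of $\pi_{\psi^N_p}$ as an indicator that both $\mu_{1,w}$ and $\mu_{2,w}$ are unramified, and then match this against the same computation on the right-hand side. The paper instead applies the character identity at the nontrivial element $x=s_\psi$, which lands on the endoscopic group $G'=U(2)\times U(2)$; the ordinary fundamental lemma for $G'\in\cE(U(4))$ identifies the transfer of $1_{K_p}$ with $1_{K_{2,p}}\times 1_{K_{2,p}}$, and the factorisation property of the stable distribution $f^{G'}(\psi'_p)$ then yields the product $1_{K_{2,p}}^{U(2)}(\psi_{1,p})\cdot 1_{K_{2,p}}^{U(2)}(\psi_{2,p})$ directly. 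Each factor is identified with the corresponding term on the right: the second via Theorem~3.2.1(b) with $x=e$ (since $\psi_{2,p}$ is stable), and the first via the twisted fundamental lemma for $U(2)\to G(2)$, with the sign resolved by positivity rather than by tracking the Whittaker normalization.

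What this buys: your route is shorter and avoids the $U(2)\times U(2)$ endoscopic datum entirely, but it works only because at hyperspecial level all the quantities collapse to $0$ or $1$. The paper's factorisation through $G'$ produces the product form \emph{intrinsically}, without computing either side, and this is exactly what is reused at places $p\in S_f$ (where $K_p$ is not hyperspecial, dimensions are not $0/1$, and your indicator argument would not apply). A small remark: your discussion of the Arthur characters $\langle\cdot,\pi_p\rangle$ is redundant, since with $x=e$ all pairings are $1$ by definition; you do not need the separate fact that the unramified member has trivial character.
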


\begin{proof}

We have

\bes
\sum_{\pi_p \in \Pi_{\psi_p}} \dim \pi_p^{K_p} = \sum_{\pi_p \in \Pi_{\psi_p}} \tr( \pi_p(1_{K_p}) ),
\ees
and we may manipulate the right hand side using the local character identities of Propositions \ref{charGL} and \ref{charid}.  Let $(G', \xi') \in \cE_\text{ell}(U(4)_p)$ be the unique endoscopic datum with $G' = U(2)_p \times U(2)_p$, and let $\psi'_p = \psi_{1,p} \times \psi_{2,p} \in \Psi(G')$.  It may be seen that $(G',\psi'_p)$ is the pair associated to $(\psi_p,s_\psi)$ by the correspondence of $\mathsection$\ref{class61}.  We recall the distribution $f \mapsto f^{G'}(\psi_p')$ on $\cH(G')$ associated to $\psi_p'$ in (\ref{linform}).  Applying Proposition \ref{charid} with $s = s_{\psi_p}$, and the fundamental lemma for the group $G' \in \cE(U(4)_p)$, gives

\bes
\sum_{\pi_p \in \Pi_{\psi_p}} \tr( \pi_p(1_{K_p}) ) = (1_{K_{2,p}} \times 1_{K_{2,p}})^{G'}(\psi_p').
\ees

Because $\psi_p' = \psi_{1,p} \times \psi_{2,p}$, the factorisation property of the linear form $f^{G'}(\psi_p')$ allows us to write this as

\bes
\sum_{\pi_p \in \Pi_{\psi_p}} \tr( \pi_p(1_{K_p}) ) = 1_{K_{2,p}}^{U(2)}(\psi_{1,p}) 1_{K_{2,p}}^{U(2)}(\psi_{2,p}),
\ees
where $f \mapsto f^{U(2)}(\psi_{i,p})$ are the distributions on $\cH(U(2)_p)$ associated to $\psi_{i,p}$.  Because $s_{\psi_{i,p}} = e$ for $i = 1,2$, we may express $1_{K_{2,p}}^{U(2)}(\psi_{i,p})$ in terms of traces of representations by applying Proposition \ref{charid} with $s = e$, which gives

\be
\label{positive}
1_{K_{2,p}}^{U(2)}(\psi_{i,p}) = \sum_{\pi_p' \in \Pi_{\psi_{i,p}} } \tr( \pi_p'(1_{K_{2,p}}) ) = \sum_{\pi_p' \in \Pi_{\psi_{i,p}} } \dim \pi_p'^{K_{2,p}}.
\ee
This gives the required expression for $1_{K_{2,p}}^{U(2)}(\psi_{2,p})$.

We evaluate $1_{K_{2,p}}^{U(2)}(\psi_{1,p})$ by applying Proposition \ref{charGL} with the embedding $\xi$ chosen to be $\xi_+ : {}^LU(2)_p \rightarrow {}^LG(2)_p$.  If we restrict the map

\bes
\xi_+ \circ \psi_{1,p} : L_{\Q_p} \times SL(2,\C) \rightarrow {}^LG(2)_p
\ees
to $L_{E_w} \times SL(2,\C)$, it is equivalent to

\begin{align*}
\xi_+ \circ \psi_{1,p} : L_{E_w} \times SL(2,\C) & \rightarrow GL(2,\C) \\
\sigma \times A & \mapsto \phi_{1,w}^N(\sigma) A.
\end{align*}
It follows that the representation of $G(2)_p \simeq GL(2,E_w)$ associated to $\xi_+ \circ \psi_{1,p}$ is equal to $\mu_{1,w} \circ \det$.  We denote the canonical extension of this representation to $\widetilde{G}^+(2)_p$ by $\widetilde{\pi}_1$.  If we identify $\widetilde{K}_{2,w}$ with a subgroup of $G(2)_p$, the twisted fundamental lemma implies that we may take $\widetilde{f} = 1_{\widetilde{K}_{2,w} \rtimes \theta} \in \tcH_p(2)$ in Proposition \ref{charGL} to obtain

\bes
1_{K_{2,p}}^{U(2)}(\psi_{1,p}) = \tr( \widetilde{\pi}_1( 1_{\widetilde{K}_{2,w} \rtimes \theta} ) ). 
\ees
Because $\theta^2 = 1$, we have

\bes
\tr( \widetilde{\pi}_1( 1_{\widetilde{K}_{2,w} \rtimes \theta} ) ) = \pm \dim \widetilde{\pi}_1^{\widetilde{K}_{2,w}} = \pm \dim \mu_{1,w}^{\widetilde{K}_{1,w}}.
\ees
Applying equation (\ref{positive}) with $i = 1$ implies that $1_{K_{2,p}}^{U(2)}(\psi_{1,p}) \ge 0$, which means that we must take the positive sign.  This completes the proof.

\end{proof}

\begin{lemma}
\label{unramsplit}

Let $p \notin S_f$ be split in $E$, and let $w|p$.  Let $\Pi_{\psi_p} = \{ \pi_p \}$, and $\Pi_{\psi_{2,p}} = \{ \pi_p' \}$.  We have

\bes
\dim \pi_p^{K_p(n)} = [K_p : (K_p \cap P_p) K_p(n)] \dim \mu_{1,w}^{\widetilde{K}_{1,w}(n)} \dim \pi_p'^{K_{2,p}(n)}.
\ees

\end{lemma}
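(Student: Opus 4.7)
\medskip

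\noindent \textbf{Proof plan.} At the split place $p$, $U(4)_p \simeq GL(4,E_w)$ and the parabolic $P$ corresponds to the standard block parabolic with Levi $L_p \simeq GL(2,E_w) \times GL(2,E_w)$. The first step is to identify $\pi_p$ explicitly. Because $p$ splits, the local Arthur packet $\Pi_{\psi_p}$ is a singleton, and the representation associated by the recipe of \S3.2 of \cite{Mo} to the parameter $\nu(2) \boxtimes \phi^N_{1,p} \boxplus \phi^N_{2,p}$ is the isobaric sum of the local components; concretely, $\pi_p$ is the (irreducible) parabolic induction from $P_p$ of $\sigma := (\mu_{1,w} \circ \det) \otimes \mu_{2,w}$, and $\pi_p'$ is just $\mu_{2,w}$ viewed as a representation of $U(2)_p \simeq GL(2,E_w)$. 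Once this is in hand, the statement reduces to computing the dimension of $K_p(n)$-invariants in an induced representation.

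For this I would apply Mackey theory to $\pi_p = \mathrm{Ind}_{P_p}^{G_p}(\sigma)$. Using the Iwasawa decomposition $G_p = P_p K_p$, restriction to $K_p$ gives $\pi_p|_{K_p} = \mathrm{Ind}_{H}^{K_p}(\sigma|_H)$, where $H := K_p \cap P_p$. A section $f : K_p \to W_\sigma$ satisfying $f(hkg) = \sigma(h)f(k)$ for $h \in H$, $g \in K_p(n)$ is determined by its values on a set of double-coset representatives for $H \backslash K_p / K_p(n)$, and on each representative $g_0$ the value must lie in $W_\sigma^{\sigma(H \cap g_0 K_p(n) g_0^{-1})}$. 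This yields
\bes
\dim \pi_p^{K_p(n)} = \sum_{[g_0] \in H \backslash K_p / K_p(n)} \dim W_\sigma^{\,H \cap g_0 K_p(n) g_0^{-1}}.
\ees

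The crucial simplification comes from the fact that $K_p(n)$ is \emph{normal} in $K_p$ (it is a principal congruence subgroup). Hence $g_0 K_p(n) g_0^{-1} = K_p(n)$ for every $g_0$, each summand equals $\dim W_\sigma^{H \cap K_p(n)}$, and the number of double cosets is $[K_p : H K_p(n)] = [K_p : (K_p \cap P_p)K_p(n)]$. Thus
\bes
\dim \pi_p^{K_p(n)} = [K_p : (K_p \cap P_p)K_p(n)] \cdot \dim W_\sigma^{\,P_p \cap K_p(n)}.
\ees
It remains to factor the second factor. Using the Iwahori-style decomposition at the split place, $P_p \cap K_p(n) = (L_p \cap K_p(n))(N_p \cap K_p(n))$; since the unipotent radical $N_p$ acts trivially on $\sigma$, we may drop it. Under the block decomposition, $L_p \cap K_p(n)$ factors as a product of the principal congruence subgroups of level $n$ in the two $GL(2,\cO_w)$ blocks, and $\sigma = (\mu_{1,w} \circ \det) \otimes \mu_{2,w}$ factors accordingly. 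The invariants of $\mu_{2,w}$ under the second-block subgroup equal $\dim \pi_p'^{K_{2,p}(n)}$ by construction. For the first block, invariance of $\mu_{1,w} \circ \det$ under the level-$n$ principal congruence subgroup of $GL(2,\cO_w)$ is equivalent to triviality of $\mu_{1,w}$ on $\det$ of that group, which is $1 + n\cO_w$ --- precisely the triviality condition measuring $\dim \mu_{1,w}^{\widetilde K_{1,w}(n)}$. Combining the three factors yields the stated identity.

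The only step requiring real care is the identification of $\pi_p$ as the genuine parabolic induction (rather than a subquotient), together with the compatibility of Mok's local recipe at a split place with the isobaric induction on the $GL$ side; the rest is a routine Mackey and Iwahori-decomposition bookkeeping that hinges only on the normality of $K_p(n)$ in $K_p$.
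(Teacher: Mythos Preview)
Your argument is correct and follows the same route as the paper: identify $\pi_p$ (at the split place) with the full parabolic induction of $(\mu_{1,w}\circ\det)\otimes\mu_{2,w}$ from the $(2,2)$-parabolic, restrict to $K_p$ via Iwasawa, and then use normality of $K_p(n)$ together with the block decomposition of $L_p\cap K_p(n)$ to factor the invariants. Your write-up is more explicit about the Mackey bookkeeping and the identity $\dim(\mu_{1,w}\circ\det)^{\widetilde K_{2,w}(n)}=\dim\mu_{1,w}^{\widetilde K_{1,w}(n)}$, but the content is the same.
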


\begin{proof}

Under the identification $U(4)_p \simeq GL(4,E_w)$, the discussion of Section \ref{class9} implies that $\pi_p$ is isomorphic to the representation induced from the representation $(\mu_{1,w} \circ \det) \otimes \mu_{2,w}$ of $\widetilde{P}_w$.  The restriction of $\pi_p$ to $K_p$ is isomorphic to the induction of $(\mu_{1,w} \circ \det) \otimes \mu_{2,w}$ from $\widetilde{P}_w \cap \widetilde{K}_w$ to $\widetilde{K}_w$.  Because $\widetilde{K}_w(n) \cap \widetilde{L}_w = \widetilde{K}_{2,w}(n) \times \widetilde{K}_{2,w}(n)$, and $\dim (\mu_{1,w} \circ \det)^{\widetilde{K}_{2,w}(n)} = \dim \mu_{1,w}^{\widetilde{K}_{1,w}(n)}$, we have

\bes
\dim \pi_p^{K_p(n)} = [\widetilde{K}_w : (\widetilde{K}_w \cap \widetilde{P}_w) \widetilde{K}_w(n)] \dim \mu_{1,w}^{\widetilde{K}_{1,w}(n)} \dim \pi_p'^{K_{2,p}(n)}
\ees
which is equivalent to the lemma.

\end{proof}

\begin{lemma}

Let $p \in S_f$, and let $w|p$.  There is a choice of $\widetilde{K}_{1,w}$ and $K_{2,p}$, depending only on $K_p$, such that

\bes
\sum_{\pi_p \in \Pi_{\psi_p}} \dim \pi_p^{K_p} \ll \dim \mu_{1,w}^{\widetilde{K}_{1,w}} \sum_{\pi_p' \in \Pi_{\psi_{2,p}}} \dim \pi_p'^{K_{2,p}}.
\ees

\end{lemma}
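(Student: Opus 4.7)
The plan is to mimic the proof of Lemma~\ref{paralocalnonsplit}, replacing the (twisted) fundamental lemma with the existence theorems for (twisted) endoscopic transfer at ramified primes. Throughout, implied constants may depend on $K_p$ but not on $\psi$. The endoscopic datum $(G',\xi') \in \cE(U(4))$ with $G' = U(2) \times U(2)$ from the previous proof remains relevant. First, write
\[
\sum_{\pi_p \in \Pi_{\psi_p}} \dim \pi_p^{K_p} \ll \sum_{\pi_p \in \Pi_{\psi_p}} \tr(\pi_p(1_{K_p})),
\]
(with the constant accounting for $\operatorname{vol}(K_p)$ if $K_p$ is not of volume one). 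By Waldspurger's transfer theorem, $1_{K_p}$ admits a transfer $f'_p \in \cH(G'_p)$, which can be chosen bi-invariant under $K_{2,p} \times K_{2,p}$ for some compact open $K_{2,p} \subset U(2)_p$ depending only on $K_p$. Applying Theorem 3.2.1(b) of \cite{Mo} at $x = s_\psi$, then decomposing $f'_p$ as a finite sum $\sum_{j=1}^J f_{1,j} \otimes f_{2,j}$ of pure tensors of $K_{2,p}$-bi-invariant functions (with $J$ bounded in terms of $K_p$), the factorization in Theorem 3.2.1(a) yields
\[
\sum_{\pi_p \in \Pi_{\psi_p}} \tr(\pi_p(1_{K_p})) = \sum_{j=1}^{J} f_{1,j}^{U(2)}(\psi_{1,p}) \cdot f_{2,j}^{U(2)}(\psi_{2,p}).
\]

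For each $j$, the second factor is handled exactly as in the proof of Lemma~\ref{paralocalnonsplit}: stability of $\psi_{2,p}$ together with Theorem 3.2.1(b) applied with $x=e$ give
$f_{2,j}^{U(2)}(\psi_{2,p}) = \sum_{\pi'_p \in \Pi_{\psi_{2,p}}} \tr(\pi'_p(f_{2,j}))$, and $K_{2,p}$-bi-invariance yields
$|\tr(\pi'_p(f_{2,j}))| \ll \dim {\pi'_p}^{K_{2,p}}$. The first factor is handled by the twisted transfer: applying (3.2.8) of \cite{Mo} with $\xi = \xi_+$ we have
\[
f_{1,j}^{U(2)}(\psi_{1,p}) = \tr(\widetilde{\pi}_1(\widetilde{f}_{1,j})),
\]
where $\widetilde{\pi}_1$ extends $\mu_{1,w}\circ \det$ to $\widetilde{G}^+(2)_p$ and $\widetilde{f}_{1,j} \in \tcH_p(2)$ is a twisted transfer of $f_{1,j}$, whose existence is Waldspurger's twisted transfer theorem. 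We may assume that $\widetilde{f}_{1,j}$ is bi-invariant under $\widetilde{K}_{2,w} \rtimes \theta$ for a fixed $\widetilde{K}_{2,w}$ depending only on $K_p$; setting $\widetilde{K}_{1,w}$ to be any compact open subgroup of $E_w^\times$ containing $\det(\widetilde{K}_{2,w})$, one obtains
$|\tr(\widetilde{\pi}_1(\widetilde{f}_{1,j}))| \ll \dim (\mu_{1,w}\circ \det)^{\widetilde{K}_{2,w}} \leq \dim \mu_{1,w}^{\widetilde{K}_{1,w}}$.

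Assembling the $J = O_{K_p}(1)$ contributions gives the asserted bound. The main obstacle is the appeal to the endoscopic and twisted endoscopic transfer theorems at ramified primes; these are exactly the ramified analogues of the unramified (twisted) fundamental lemma used in Lemma~\ref{paralocalnonsplit}, and are available by Waldspurger's work. A minor technical point is to verify that the bi-invariance of the transfer $f'_p$ (resp.\ the twisted transfer $\widetilde{f}_{1,j}$) may be chosen uniformly so that $K_{2,p}$ (resp.\ $\widetilde{K}_{2,w}$) depends only on $K_p$; this follows because $1_{K_p}$ is a single fixed function and the transfer maps are continuous for the natural topologies on Hecke algebras.
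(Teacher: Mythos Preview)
Your proposal is correct and follows essentially the same route as the paper: transfer $1_{K_p}$ to $G'=U(2)\times U(2)$, decompose the transfer as a finite sum of pure tensors, factor the stable distribution via Theorem~3.2.1(a), and bound each factor by a dimension of fixed vectors using bi-invariance. The only cosmetic difference is that the paper dispatches the split case directly from the explicit description of $\Pi_{\psi_p}$ before running the endoscopic argument at nonsplit $p$, whereas you run the transfer argument uniformly; and the paper simply says ``applying the definition of $f_{i,1}^{U(2)}(\psi_{1,p})$'' where you spell out the twisted transfer to $\widetilde{G}(2)_p$ explicitly.
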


\begin{proof}

If $p$ is split, this follows from the explicit description of $\Pi_{\psi_p}$ as in Lemma \ref{unramsplit}.  Assume that $p$ is nonsplit, and continue to use the notation of Lemma \ref{paralocalnonsplit}.  Let $\widetilde{1}_{K_p} \in \cH(G')$ be a transfer of $1_{K_p}$ to $G'$.  Reasoning as in the proof of Lemma \ref{paralocalnonsplit} gives

\bes
\sum_{\pi_p \in \Pi_{\psi_p}} \dim \pi_p^{K_p} = \text{vol}(K_p)^{-1} \widetilde{1}_{K_p}^{G'}(\psi_p'),
\ees
where $\text{vol}(K_p)$ denotes the volume of $K_p$ with respect to our chosen Haar measure on $U(4)_p$.  We may write $\widetilde{1}_{K_p} = \sum f_{i,1} \times f_{i,2}$ for $f_{i,j} \in \cH(U(2)_p)$, and the factorisation property of $f^{G'}(\psi_p')$ gives

\begin{align*}
\widetilde{1}_{K_p}^{G'}(\psi_p') & = \sum_i (f_{i,1} \times f_{i,2})^{G'}(\psi_p') \\
& = \sum_i f_{i,1}^{U(2)}(\psi_{1,p}) f_{i,2}^{U(2)}(\psi_{2,p}).
\end{align*}
Applying Proposition \ref{charid} with $s = e$ gives

\begin{align*}
f_{i,2}^{U(2)}(\psi_{2,p}) & = \sum_{\pi_p' \in \Pi_{\psi_{2,p}} } \tr( \pi_p'(f_{i,2}) ) \\
& \le C(f_{i,2}) \sum_{\pi_p' \in \Pi_{\psi_{2,p}} } \dim \pi_p'^{K_{2,p}}
\end{align*}
if $K_{2,p}$ is chosen so that $f_{i,2}$ is bi-invariant under $K_{2,p}$ for all $i$.  Likewise, applying Proposition \ref{charGL} and the definition of $\widetilde{\pi}_{\psi_{1,p}^N}$ shows that $f_{i,1}^{U(2)}(\psi_{1,p}) \le C(f_{i,1}) \dim \mu_{1,w}^{\widetilde{K}_{1,w}}$ if $\widetilde{K}_{1,w}$ is chosen sufficiently small depending on $f_{i,1}$.  As the collection of functions $f_{i,j}$ depended only on $K_p$, so do $\widetilde{K}_{1,w}$ and $K_{2,p}$, and the constant factors.

\end{proof}

\subsection{Summing over parameters}
\label{secparasum}

We now use Proposition \ref{paralocal} to control the contribution to $h^3_{(2)}(Y(n))^\star$ from all $\psi$.

\begin{lemma}
\label{archlocal}

Let $\psi \in \Psi(U(4), \xi_+)$, and suppose that $\psi^N = \nu(2) \boxtimes \phi_1^N \boxplus \phi_2^N$ with $\phi_i^N \in \tPhi_\textup{sim}(i)$.  If $\pi \in \Pi_{\psi_\infty}$ satisfies $H^*(\g,K;\pi) \neq 0$, then we have

\begin{align*}
\phi^N_{1,\infty} : z & \mapsto (z/\overline{z})^{\alpha'} \\
\phi^N_{2,\infty} : z & \mapsto \left( \begin{array}{cc} (z/\overline{z})^{\alpha_1} & \\ & (z/\overline{z})^{\alpha_2} \end{array} \right)
\end{align*}
with $\alpha' \in \{ 1, 0, -1 \}$, $\alpha_i \in \{ 3/2, 1/2, -1/2, -3/2 \}$, and $\alpha_1 \neq \alpha_2$.

\end{lemma}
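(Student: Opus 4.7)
The plan is to extract all constraints from the condition that $\pi$ be $(\g, K)$-cohomological with trivial coefficients, which forces the infinitesimal character of $\pi$ to equal that of the trivial representation. In the paper's convention identifying infinitesimal characters of $U(4)_\infty$-representations with points of $\C^4/S_4$, the trivial representation has infinitesimal character $\rho = (3/2, 1/2, -1/2, -3/2)$. Since every representation in the Arthur packet $\Pi_{\psi_\infty}$ shares the infinitesimal character attached to the archimedean Langlands parameter $\phi_{\psi_\infty}(w) = \psi_\infty\bigl(w,\, \mathrm{diag}(|w|^{1/2}, |w|^{-1/2})\bigr)$, matching $\phi_{\psi_\infty}$ against $\rho$ yields the desired restrictions on $\alpha', \alpha_1, \alpha_2$.

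I would first compute $\phi_{\psi_\infty}|_{\C^\times}$ explicitly. Writing $\phi^N_{1,\infty}(z) = (z/\overline{z})^{\alpha'}$ and $\phi^N_{2,\infty}(z) = \mathrm{diag}\bigl((z/\overline{z})^{\alpha_1}, (z/\overline{z})^{\alpha_2}\bigr)$ on $W_{E_\infty} = \C^\times$, and using $|z|_{W_\R} = z\overline{z}$ for $z \in \C^\times$, the $\nu(2)\boxtimes \phi^N_1$ block sends $z$ to
\[
(z/\overline{z})^{\alpha'}\,\mathrm{diag}\bigl((z\overline{z})^{1/2},\, (z\overline{z})^{-1/2}\bigr) = \mathrm{diag}\bigl(z^{\alpha'+1/2}\,\overline{z}^{\,-\alpha'+1/2},\; z^{\alpha'-1/2}\,\overline{z}^{\,-\alpha'-1/2}\bigr),
\]
while the $\phi^N_2$ block remains $\mathrm{diag}\bigl(z^{\alpha_1}\overline{z}^{\,-\alpha_1},\, z^{\alpha_2}\overline{z}^{\,-\alpha_2}\bigr)$. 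Reading off the $z$-exponents (and verifying the convention against the trivial representation, whose Arthur parameter $\psi(w, g) = g^{3}$ indeed produces the tuple $\rho$) identifies the infinitesimal character of $\pi$ with the multiset $\{\alpha'+\tfrac{1}{2},\, \alpha'-\tfrac{1}{2},\, \alpha_1,\, \alpha_2\}$ in $\C^4/S_4$.

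Finally I would equate this multiset to $\{3/2, 1/2, -1/2, -3/2\}$ and enumerate solutions. Since the target consists of half-integers, $\alpha'$ must be an integer; the constraint that both $\alpha'+\tfrac{1}{2}$ and $\alpha'-\tfrac{1}{2}$ lie in the target then confines $\alpha'$ to $\{1, 0, -1\}$. For each such value, the pair $\{\alpha_1, \alpha_2\}$ is forced to be the remaining two elements of $\{3/2, 1/2, -1/2, -3/2\}$, whence $\alpha_i \in \{3/2, 1/2, -1/2, -3/2\}$ and $\alpha_1 \neq \alpha_2$.

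The main subtle point is the identification of the infinitesimal character of $\pi$ with the multiset of $z$-exponents of $\phi_{\psi_\infty}|_{\C^\times}$; this requires pinning down the paper's conventions for the $\C^N/S_N$-parametrization and the normalization of the $SL(2,\C)$-substitution in forming $\phi_{\psi_\infty}$. Note that local conjugate self-duality of $\phi^N_{1,\infty}$ and $\phi^N_{2,\infty}$ is automatic and places no constraint on the $\alpha$'s at the complex place, so the parity hypotheses on the $\phi^N_i$ are used only implicitly through the fact that their infinitesimal characters align with $\rho$. Granted the conventions, the argument is a direct enumeration.
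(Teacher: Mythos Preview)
Your approach is essentially the paper's --- read off the infinitesimal character of $\pi$ from $\phi_{\psi_\infty}|_{\C^\times}$ and match it against $\rho=(3/2,1/2,-1/2,-3/2)$ --- but there is a genuine gap. You write $\phi^N_{2,\infty}(z)=\mathrm{diag}\bigl((z/\overline z)^{\alpha_1},(z/\overline z)^{\alpha_2}\bigr)$ at the outset, yet this form is part of the \emph{conclusion} of the lemma, not a hypothesis. A priori the diagonal entries are $z^{\alpha_i}\overline z^{\,\beta_i}$ with $\alpha_i-\beta_i\in\Z$, and local conjugate self-duality of $\phi^N_{2,\infty}$ does \emph{not} force $\beta_i=-\alpha_i$: it only says that the multiset $\{(\beta_1,\alpha_1),(\beta_2,\alpha_2)\}$ equals $\{(-\alpha_1,-\beta_1),(-\alpha_2,-\beta_2)\}$, which also admits the swap solution $\alpha_2=-\beta_1$, $\beta_2=-\alpha_1$. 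Your infinitesimal-character computation sees only the $z$-exponents, so it correctly pins down $\alpha',\alpha_1,\alpha_2$ but says nothing about the $\overline z$-exponents.

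The paper fills this in as follows. Starting from the general shape $z^{\alpha}\overline z^{\,\beta}$, the matching with $\rho$ gives $\alpha'\in\{1,0,-1\}$ and $\alpha_i\in\{3/2,1/2,-1/2,-3/2\}$ with $\alpha_1\neq\alpha_2$, exactly as you found. For $\phi^N_{1,\infty}$ one has $\alpha'=-\beta'$ because $\mu_1$ is a unitary character (equivalently, by conjugate self-duality, as you note). For $\phi^N_{2,\infty}$ one uses that $\mu_2$ is a unitary cusp form on $GL(2,\A_E)$: the Jacquet--Shalika type bound gives $|\alpha_i+\beta_i|<1/2$. Since $\alpha_i$ is a half-integer and $\alpha_i-\beta_i\in\Z$, the sum $\alpha_i+\beta_i$ is an integer of absolute value less than $1/2$, hence zero. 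Inserting this one extra step makes your argument complete and identical to the paper's.
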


\begin{proof}

We write

\begin{align*}
\phi^N_{1,\infty} : z & \mapsto z^{\alpha'} \overline{z}^{\beta'} \\
\phi^N_{2,\infty} : z & \mapsto \left( \begin{array}{cc} z^{\alpha_1} \overline{z}^{\beta_1} & \\ & z^{\alpha_2} \overline{z}^{\beta_2} \end{array} \right)
\end{align*}
with $\alpha' - \beta', \alpha_i - \beta_i \in \Z$.  If we let $\phi_{\psi_\infty}$ be the Langlands parameter associated to $\psi_\infty$ as in (\ref{atol}), any $\pi \in \Pi_{\psi_\infty}$ has the same infinitesimal character as the representations in the $L$-packet of $\phi_{\psi_\infty}$, which is $(\alpha'+1/2, \alpha' - 1/2, \alpha_1, \alpha_2) \in \C^4 / S_4$ (see for instance \cite[Prop 7.4]{Vo}).  If $\pi$ is to have cohomology it must have the same infinitesimal character as the trivial representation, so that $\{\alpha'+1/2, \alpha' - 1/2, \alpha_1, \alpha_2 \} = \{ 3/2, 1/2, -1/2, -3/2\}$.  This implies that $\alpha' \in \{ 1, 0, -1 \}$ and $\alpha_i \in \{ 3/2, 1/2, -1/2, -3/2\}$ with $\alpha_1 \neq \alpha_2$.   Because $\mu_1$ is a character we have $\alpha' = -\beta'$, and because $\mu_2$ is a cusp form on $GL(2,E)$ we have $|\alpha_i + \beta_i| < 1/2$ so that $\alpha_i = -\beta_i$.  This completes the proof.

\end{proof}

For $i = 1,2$, we define $\Phi_\text{rel}(i) \subset \tPhi_\text{sim}(i)$ to be the set of parameters $\phi_i^N$ such that $\phi^N_{i,\infty}$ satisfies the relevant constraints of Lemma \ref{archlocal}.  If $\phi_2^N \in \Phi_\text{rel}(2)$ is associated to a cuspidal representation $\mu$, it follows that $\mu$ is regular algebraic, conjugate self-dual, and cuspidal, and hence tempered at all places by Theorem 1.2 of \cite{Ca}.

 Lemma \ref{archlocal} and equation (\ref{U4class1}) imply that

\begin{align*}
h^3_{(2)}(Y(n))^\star & \ll \sum_{ \substack{  \psi^N = \nu(2) \boxtimes \phi^N_1 \boxplus \phi^N_2 \\ \phi_i^N \in \Phi_\text{rel}(i) }}  \sum_{\pi \in \Pi_\psi} \dim \pi_f^{K(n)} \\
& = \sum_{ \substack{  \psi^N = \nu(2) \boxtimes \phi^N_1 \boxplus \phi^N_2 \\ \phi_i^N \in \Phi_\text{rel}(i) }} \# ( \Pi_{\psi_\infty}) \sum_{\pi_f \in \Pi_{\psi,f}} \dim \pi_f^{K(n)}.
\end{align*}
We may ignore the factor $\# (\Pi_{\psi_{\infty}})$ because there are only finitely many possibilities for $\psi_{\infty}$.  Applying Proposition \ref{paralocal} to the right hand side gives

\bes
h^3_{(2)}(Y(n))^\star \ll [ K : (K \cap P(\A_f)) K(n)] \sum_{  \phi_1^N \in \Phi_\text{rel}(1) } \dim \mu_1^{\widetilde{K}_1(n)} \sum_{  \phi_2^N \in \Phi_\text{rel}(2) }  \sum_{\pi'_f \in \Pi_{\psi_2,f}} \dim \pi_f'^{K_2(n)},
\ees
where $\mu_1$ is the automorphic character associated to $\phi_1^N$.  We may enlarge the sum from $\Pi_{\psi_2,f}$ to $\Pi_{\psi_2}$, which gives

\be
\label{asum2}
h^3_{(2)}(Y(n))^\star \ll [ K : (K \cap P(\A_f)) K(n)] \sum_{  \phi_1^N \in \Phi_\text{rel}(1) } \dim \mu_1^{\widetilde{K}_1(n)} \sum_{  \phi_2^N \in \Phi_\text{rel}(2) }  \sum_{\pi' \in \Pi_{\psi_2}} \dim \pi_f'^{K_2(n)}.
\ee

Lemma \ref{archlocal} implies that there are only three possibilities for $\mu_{1,\infty}$, and therefore

\be
\label{asum3}
\sum_{  \phi_1^N \in \Phi_\text{rel}(1) } \dim \mu_1^{\widetilde{K}_1(n)} \ll [K_1 : K_1(n)].
\ee
There is a finite set $\Xi_\infty$ of representations of $U(2)_\infty$ such that if $\phi_2^N \in \Phi_\text{rel}(2)$ and $\pi' \in \Pi_{\psi_2}$, then $\pi'_\infty \in \Xi_\infty$.  Moreover, because $\psi_2$ is a simple generic parameter, we have $\Pi_\psi(\epsilon_\psi) = \Pi_\psi$ and so every $\pi' \in \Pi_{\psi_2}$ occurs in $L^2_\text{disc}(U(2)(\Q) \backslash U(2)(\A))$ with multiplicity one.  We define $X(n) = U(2)(\Q) \backslash U(2)(\A) / K_2(n)$, and let $m(\pi_\infty, X(n) )$ denote the multiplicity with which a representation $\pi_\infty$ occurs in $L^2_\text{disc}(X(n))$.  We have

\begin{align}
\notag
\sum_{  \phi_2^N \in \Phi_\text{rel}(2) } \sum_{\pi' \in \Pi_{\psi_2}} \dim \pi_f'^{K_2(n)} & \le \sum_{ \substack{ \pi' \in  L^2_\text{disc}(U(2)(\Q) \backslash U(2)(\A)) \\ \pi'_\infty \in \Xi_\infty} } \dim \pi_f'^{K_2(n)} \\
\notag
& = \sum_{\pi_\infty \in \Xi_\infty} m(\pi_\infty, X(n) ) \\
\label{asum4}
& \ll [ K_2 : K_2(n) ].
\end{align}
Combining (\ref{asum2})--(\ref{asum4}) gives

\bes
h^3_{(2)}(Y(n))^\star \ll [ K : (K \cap P(\A_f)) K(n)] [ K_2 : K_2(n) ] [K_1 : K_1(n)].
\ees
Applying the formula for the order of $GL(N)$ over a finite field completes the proof.

\subsection{The case of $\phi_2^N$ composite}
\label{secparacomposite}

We now briefly explain how to bound the contribution to $h^3_{(2)}(Y(n))^\star$ from parameters with $\phi_2^N = \phi_{21}^N \boxplus \phi_{22}^N$, where $\phi_{2i}^N \in \tPhi(1)$.  We let $\phi_{2i}^N$ correspond to a conjugate self-dual character $\mu_{2i}$ on $GL(1,\A_E)$.  Let $P_2$ be the standard Borel subgroup of $U(2)$.  We may prove the following analogue of Proposition \ref{paralocal}.

\begin{prop}

There is a choice of $\widetilde{K}_{1,w}$ for $w | p$, $p \in S_f$, depending only on $K$, such that

\begin{multline}
\label{paracomp}
\sum_{\pi_f \in \Pi_{\psi,f}} \dim \pi_f^{K(n)} \ll [ K : (K \cap P(\A_f)) K(n)] [ K_2 : (K_2 \cap P_2(\A_f)) K_2(n) ] \\
\dim \mu_1^{\widetilde{K}_1(n)} \dim \mu_{21}^{\widetilde{K}_1(n)} \dim \mu_{22}^{\widetilde{K}_1(n)}.
\end{multline}

\end{prop}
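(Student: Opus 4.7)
The plan is to repeat the proof of Proposition \ref{paralocal} verbatim at the $U(4)$ level, noting that neither Lemma \ref{paralocalnonsplit} nor its split-place and $S_f$-place counterparts used the simplicity of $\phi_2^N$, and then to apply the same three-step recipe a second time on $U(2)$ to bound the residual sum. Replaying the $U(4)$ argument produces
\begin{equation*}
\sum_{\pi \in \Pi_\psi} \dim \pi_f^{K(n)} \ll [K : (K \cap P(\A_f)) K(n)] \, \dim \mu_1^{\widetilde{K}_1(n)} \sum_{\pi' \in \Pi_{\psi_2}} \dim \pi_f'^{K_2(n)},
\end{equation*}
where now $\psi_2 = (\phi_{21}^N \boxplus \phi_{22}^N, \tpsi_2) \in \Psi(U(2), \xi_+)$ is the composite elliptic generic parameter arising from the endoscopic datum $(U(1) \times U(1), \xi'')$, with $\xi''$ the standard diagonal $L$-embedding and cuspidal characters $\mu_{21}, \mu_{22}$.

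To bound the residual $U(2)$ sum I would factor it over places. At split $p \notin S_f$, the singleton $\Pi_{\psi_{2,p}}$ is realised inside $GL(2,E_w)$ as the principal series induced from $\mu_{21,w} \otimes \mu_{22,w}$, and Frobenius reciprocity on $K_{2,p}$ gives
\begin{equation*}
\dim \pi_p'^{K_{2,p}(n)} = [K_{2,p} : (K_{2,p} \cap P_{2,p}) K_{2,p}(n)] \, \dim \mu_{21,w}^{\widetilde{K}_{1,w}(n)} \, \dim \mu_{22,w}^{\widetilde{K}_{1,w}(n)}.
\end{equation*}
At nonsplit $p \notin S_f$, $K_{2,p}(n)$ coincides with the hyperspecial $K_{2,p}$; Theorem 3.2.1 of \cite{Mo} applied to $(\psi_{2,p}, e)$ on $U(2)$, together with the fundamental lemma for $(U(1)\times U(1), \xi'')$, factors $1_{K_{2,p}}^{U(2)}(\psi_{2,p})$ as a product of two $U(1)$ stable distributions, each of which is evaluated against $\overline{\phi}_{2i,p}$ by the same twisted fundamental lemma / twisted base change argument that concluded the proof of Lemma \ref{paralocalnonsplit}, producing $\dim \mu_{2i,w}^{\widetilde{K}_{1,w}}$. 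At $p \in S_f$, one transfers $1_{K_{2,p}}$ to $U(1)_p \times U(1)_p$, decomposes the transfer into a finite sum of elementary tensors, and shrinks $\widetilde{K}_{1,w}$ -- depending only on $K$ -- until each factor is dominated by a constant times the appropriate $\dim \mu_{2i,w}^{\widetilde{K}_{1,w}}$.

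Taking the product over all $p$ and recognising $\prod_p$ of the parabolic indices as the global index $[K_2 : (K_2 \cap P_2(\A_f)) K_2(n)]$ assembles the two parabolic indices and the three character dimensions into exactly the right-hand side of (\ref{paracomp}).

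The main obstacle will be the second nonsplit step at the $U(2)$ level: one must verify that $f \mapsto f^{U(2)}(\psi_{2,p})$ truly factors as a tensor product of two $U(1)$ stable distributions, that the two resulting twisted traces $\tr(\widetilde{\mu}_{2i,w}(1_{\widetilde{K}_{1,w} \rtimes \theta}))$ both come out with sign $+1$ (forced by positivity of the sum of dimensions being computed, exactly as in Lemma \ref{paralocalnonsplit}), and that each $\phi_{2i}^N$ is paired with the correct base-change embedding $\xi_+$ or $\xi_-$ dictated by its parity via Theorem 2.4.2 of \cite{Mo}.
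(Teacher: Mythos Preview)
Your overall strategy is correct and matches the paper's own one-line sketch (``the proof follows the same lines, by using the explicit description of $\pi_{\psi_p}$ when $p$ is split and the character identities of \cite{Mo} Theorem 3.2.1 when $p$ is inert''): first replay Proposition~\ref{paralocal} to pass from $U(4)$ to $U(2)\times U(2)$, then handle the now-composite $\psi_2$ on $U(2)$ place by place.

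One point deserves cleaning up. At nonsplit $p\notin S_f$ you write that Theorem~3.2.1 applied to $(\psi_{2,p},e)$ ``together with the fundamental lemma for $(U(1)\times U(1),\xi'')$'' factors $1_{K_{2,p}}^{U(2)}(\psi_{2,p})$. These two ingredients do not belong to the same step: the element $x=e$ corresponds to the endoscopic group $U(2)$ itself, not to $U(1)\times U(1)$. What actually works is either of two clean routes. Route~(a): take the nontrivial $x=s_{\psi_2}$, so that the endoscopic group really is $U(1)\times U(1)$; at an unramified place only the spherical member of $\Pi_{\psi_{2,p}}$ contributes to the trace against $1_{K_{2,p}}$, and its sign $\langle s_{\psi_2},\pi'_p\rangle$ is $+1$, so the signed and unsigned sums coincide and the RHS factors as claimed. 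Route~(b): keep $x=e$, use the \emph{definition} (3.2.8) of $f^{U(2)}(\psi_{2,p})$ as a twisted trace on $G(2)$, apply the twisted fundamental lemma to replace $1_{K_{2,p}}$ by $1_{\widetilde K_{2,w}\rtimes\theta}$, and observe that $\pi_{\psi_2^N}$ is the unramified principal series of $GL(2,E_w)$ with data $(\mu_{21,w},\mu_{22,w})$; the twisted trace on its one-dimensional space of spherical vectors is $\pm 1$ and positivity forces $+1$, which equals $\dim\mu_{21,w}^{\widetilde K_{1,w}}\dim\mu_{22,w}^{\widetilde K_{1,w}}$. Route~(b) is the exact analogue of the computation of $1_{K_{2,p}}^{U(2)}(\psi_{1,p})$ in Lemma~\ref{paralocalnonsplit}. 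Either choice completes the argument; just do not mix them.
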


The proof follows the same lines, by using the explicit description of $\pi_{\psi_p}$ when $p$ is split and the character identities of Propositions \ref{charGL} and \ref{charid} when $p$ is inert.  There are $\ll n^3$ choices for the three characters, and the coset factors in (\ref{paracomp}) make a contribution of $\ll_\epsilon n^{5 + \epsilon}$.  Therefore the contribution to cohomology of parameters of this type is bounded by $\ll_\epsilon n^{8 + \epsilon}$ as required.

\section{The case $\psi^N = \nu(2) \boxtimes \phi^N$}
\label{secparb}

We now define $h^3_{(2)}(Y(n))^\star$ to be the contribution to $h^3_{(2)}(Y(n))$ from parameters of the form $\nu(2) \boxtimes \phi^N$.  As in Section \ref{secpara}, we assume that $\phi^N \in \tPhi_\text{sim}(2)$ until the end of Section \ref{secparbsum}, and describe how to treat composite $\phi^N$ in $\mathsection$\ref{secparbcomposite}.  We note that $\psi \in \Psi_2(U(4), \xi_+)$ implies that $\phi^N$ must be even.  The main result of the section is the following.

\begin{prop}

We have the bound $h^3_{(2)}(Y(n))^\star \ll n^9$.

\end{prop}

We define compact open subgroups $K' = \prod_p K'_p \subset U(2)(\A_f)$, $\widetilde{K}' = \prod_w \widetilde{K}_w' \subset GL(2,\A_{E,f})$, and $\widetilde{K} = \prod_w \widetilde{K}_w \subset GL(4,\A_{E,f})$.  We assume that $K'_p = U(2)(\Z_p)$ for all $p \notin S_f$, and likewise for the other groups.  The local components of these groups for $w|p \in S_f$ will be specified in the proof of Proposition \ref{parblocal}.  We define congruence subgroups $K'(n)$, etc. of these groups for $n$ relatively prime to $S_f$ in the usual way, and recall that $n$ will only be divisible by primes that split in $E$.

We let $\widetilde{P}$ be the standard parabolic subgroup of $GL(4,E)$ with Levi $\widetilde{L} = GL(2,E) \times GL(2,E)$, and let $P$ be the corresponding standard parabolic subgroup of $U(4)$.  We let $P'$ be the standard Borel subgroup of $U(2)$.

\subsection{Controlling a single parameter}

We fix an even parameter $\phi^N \in \tPhi_\text{sim}(2)$, and let $\psi \in \Phi(U(4), \xi_+)$ be the unique parameter with $\psi^N = \nu(2) \boxtimes \phi^N$.  We let $\phi^N$ correspond to a conjugate self-dual cuspidal automorphic representation $\mu$ of $GL(2,\A_E)$.  We assume that $\mu$ is tempered at all places; as before, this is done only for simplicity.  We let $\psi' \in \Psi(U(2), \xi_-)$ be the unique parameter with $\psi'^N = \phi^N$.  We shall prove the following bound for the finite part of the contribution of $\Pi_\psi$ to $h^3_{(2)}(Y(n))^\star$.

\begin{prop}
\label{parblocal}

There is a choice of $K'_p$ for $p \in S_f$, depending only on $K$, such that 

\be
\label{parblocalsum}
\sum_{\pi_f \in \Pi_{\psi,f}} \dim \pi_f^{K(n)} \ll [ K' : (K' \cap P'(\A_f)) K'(n) ] [ K : (K \cap P(\A_f)) K(n)] \sum_{\pi'_f \in \Pi_{\psi',f}} \dim \pi_f'^{K'(n)}.
\ee

\end{prop}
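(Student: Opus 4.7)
The proof follows the template of Proposition~\ref{paralocal}. I would begin from the factorization
\[
\sum_{\pi \in \Pi_\psi} \dim \pi_f^{K(n)} = \prod_p \sum_{\pi_p \in \Pi_{\psi_p}} \dim \pi_p^{K_p(n)},
\]
reducing \eqref{parblocalsum} to a product of local inequalities. Since $n$ is divisible only by primes split in $E$, all the $n$-dependence is concentrated at the split primes dividing $n$; at nonsplit $p \notin S_f$ one has $K_p(n) = K_p$ and the desired bound reduces to an $n$-independent identity.

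At a split prime $p$, the isomorphism $U(4)_p \simeq GL(4,E_w)$ realizes the singleton $\Pi_{\psi_p}$ as the parabolic induction from $\widetilde{P}_w$ of the Speh data $\mu_w|\det|^{1/2} \otimes \mu_w|\det|^{-1/2}$, while $\Pi_{\psi'_p} = \{\mu_w\}$ under $U(2)_p \simeq GL(2,E_w)$. A Mackey/Iwasawa computation, parallel to the split-prime lemma in Section~\ref{secpara}, gives
\[
\dim \pi_p^{K_p(n)} = [\widetilde{K}_w : (\widetilde{K}_w \cap \widetilde{P}_w)\widetilde{K}_w(n)]\,\bigl(\dim \mu_w^{\widetilde{K}'_w(n)}\bigr)^2,
\]
while $\dim (\pi'_p)^{K'_p(n)} = \dim \mu_w^{\widetilde{K}'_w(n)}$. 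The extra factor of $\dim \mu_w^{\widetilde{K}'_w(n)}$ is absorbed into the Borel coset factor $[K'_p : (K'_p \cap P'_p) K'_p(n)]$ via the general bound $\dim \sigma^{K'_p(n)} \le [K'_p : (K'_p \cap P'_p) K'_p(n)]$, valid for every irreducible admissible representation $\sigma$ of $GL(2,E_w)$. At an unramified nonsplit $p$ the centralizer of $\tpsi_p(\cL_\psi \times SL(2,\C))$ in $GL(4,\C)$ reduces to the scalars, so $S_{\psi_p}$ is trivial and $\Pi_{\psi_p}$ is stable; the twisted fundamental lemma and positivity, applied as in the final paragraph of Lemma~\ref{paralocalnonsplit}, reduce the sums on both sides to dimensions of spherical vectors in parabolic inductions on $GL(4,E_w)$ and $GL(2,E_w)$, each equal to $1$. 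At $p \in S_f$, we transfer $1_{K_p}$ through the (twisted) endoscopic correspondence, decompose it into test functions on $U(2)_p$, and dominate the resulting traces by $\sum_{\pi'_p} \dim (\pi'_p)^{K'_p}$ by choosing $K'_p$ small enough depending on $K_p$.

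\textbf{Main obstacle.} The delicate step is the split-prime inequality $\dim \mu_w^{\widetilde{K}'_w(n)} \le [K'_p : (K'_p \cap P'_p) K'_p(n)]$ when $\mu_w$ is ramified. For unramified $\mu_w$ this is an equality by Iwasawa, but for supercuspidal or Steinberg $\mu_w$ it requires an independent argument, such as local newform theory on $GL(2)$ or a direct Frobenius-reciprocity estimate. A secondary technical point is the identification, at nonsplit unramified primes, of the stable distribution $f \mapsto f^{U(2)}(\psi'_p)$ via the twisted base change $\xi_-$ together with its character-$\chi$ twist: one must carefully track the parity shift, since $\phi^N$ is even on $U(2)$ (hence uses $\xi_-$) whereas $\psi^N = \nu(2) \boxtimes \phi^N$ lies in $\Psi(U(4), \xi_+)$. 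Once this bookkeeping is done, the remainder of the argument is a direct adaptation of the nonsplit-prime analysis in Lemma~\ref{paralocalnonsplit}.
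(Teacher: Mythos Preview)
Your plan is essentially the paper's proof, and the two technical points you flag --- the Kirillov bound $\dim \mu_w^{\widetilde{K}'_w(n)} \le [K'_p : (K'_p \cap P'_p)K'_p(n)]$ and the $\xi_+/\xi_-$ parity tracking --- are exactly the ones the paper isolates as separate lemmas. Two small corrections are worth making. First, at split $p$ the singleton $\pi_p$ is the Langlands \emph{quotient} of the induction from $\mu_w|\det|^{1/2}\otimes\mu_w|\det|^{-1/2}$ (a Speh representation), not the full induced representation; your displayed formula should therefore be an inequality $\le$, which is all that is needed. Second, at nonsplit $p$ and at $p\in S_f$ there is no endoscopic datum with group $U(2)$ inside $\cE(U(4))$, so one cannot ``decompose into test functions on $U(2)_p$'' directly. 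The paper instead proceeds in two separate transfers: twisted base change $U(4)\to G(4)$ with $s=e$ bounds $\sum_{\pi_p}\dim\pi_p^{K_p}$ by $\dim\mu_w^{\widetilde{K}'_w}$ via the Speh quotient structure on $GL(4,E_w)$; then, independently, twisted descent $G(2)\to U(2)$ via $\xi_-$ (this is where the parity shift enters) bounds $\dim\mu_w^{\widetilde{K}'_w}$ from below by $\sum_{\pi'_p}\dim(\pi'_p)^{K'_p}$. Your sketch conflates these two steps, but once they are separated the argument goes through as you describe.
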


We begin the proof of Proposition \ref{parblocal} with Lemmas \ref{parbsplit}--\ref{parbKirilov} below, which control the left hand side of (\ref{parblocalsum}) in terms of $\mu$.

\begin{lemma}
\label{Speh}

Let $p \notin S_f$ be split in $E$, and let $w|p$.  Let $\Pi_{\psi_p} = \{ \pi_p \}$.  We have

\be
\label{parbsplitineq}
\dim \pi_p^{K_p(n)} \le [K_p : (K_p \cap P_p) K_p(n)] ( \dim \mu_w^{\widetilde{K}'_w(n)} )^2.
\ee

\end{lemma}

\begin{proof}

Under the identification $U(4)_p \simeq GL(4,E_w)$, $\pi_p$ is the Langlands quotient of the representation $\rho_{\psi_w}$ of $GL_4(E_w)$ induced from the representation $\mu_w(x_1) | \det(x_1) |^{1/2} \otimes \mu_w(x_2) | \det(x_2) |^{-1/2}$ of $\widetilde{P}_w$.  We have

\bes
\dim \pi_p^{K_p(n)} \le \dim \rho_{\psi_w}^{\widetilde{K}_w(n)}.
\ees
The restriction of $\rho_{\psi_w}$ to $\widetilde{K}_w$ is isomorphic to the induction of $\mu_w(x_1) \times \mu_w(x_2)$ from $\widetilde{K}_w \cap \widetilde{P}_w$ to $\widetilde{K}_w$.  We see that

\begin{align*}
\dim \rho_{\psi_w}^{\widetilde{K}_w(n)} & = [\widetilde{K}_w : (\widetilde{K}_w \cap \widetilde{P}_w) \widetilde{K}_w(n)] \dim (\mu_w \times \mu_w)^{\widetilde{L}_w \cap \widetilde{K}_w(n)} \\
& = [\widetilde{K}_w : (\widetilde{K}_w \cap \widetilde{P}_w) \widetilde{K}_w(n)] ( \dim \mu_w^{\widetilde{K}'_w(n)} )^2,
\end{align*}
which is equivalent to the lemma.

\end{proof}

We remove the square on the right hand side of (\ref{parbsplitineq}) using the following lemma.

\begin{lemma}
\label{parbKirilov}

If $p \notin S_f$ is split and $w|p$, we have

\bes
\dim \mu_w^{\widetilde{K}'_w(n)} \le [ \widetilde{K}'_w : (\widetilde{K}'_w \cap \widetilde{P}_w') \widetilde{K}'_w(n) ] = [ K'_p : (K'_p \cap P'_p) K'_p(n) ].
\ees

\end{lemma}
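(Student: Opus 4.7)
The plan splits the claim into two parts. The equality on the right-hand side will follow immediately from the split identification. Since $p$ splits in $E$ as $w\bar w$, projection onto the first factor gives an isomorphism $U(2)_p \cong GL(2, E_w)$ that sends $K'_p$, $P'_p$, and $K'_p(n)$ to $\widetilde K'_w$, $\widetilde P'_w$, and $\widetilde K'_w(n)$ respectively, giving the equality of indices at once. Moreover, a direct calculation identifies either index with $|\mathbb P^1(\cO_w/\mathfrak p_w^n)| = q^{n-1}(q+1)$.

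For the inequality, my plan is to realize $\mu_w$ in its Kirillov model and study the $\widetilde K'_w(n)$-fixed subspace. Since $\mu_w$ is tempered, it is generic, and therefore has a Kirillov model $\mathcal K(\mu_w, \psi_0) \subseteq C^\infty(E_w^\times)$ for some additive character $\psi_0$ of $E_w$ of conductor $\cO_w$. In this model, the Borel $\widetilde P'_w$ acts by
\[
\begin{pmatrix} a & b \\ 0 & d \end{pmatrix} \cdot f(x) = \omega_{\mu_w}(d)\,\psi_0(bx/d)\,f(ax/d).
\]
A vector $f \in \mathcal K(\mu_w, \psi_0)^{\widetilde K'_w(n)}$ is in particular invariant under the upper unipotent part of $\widetilde K'_w(n)$, which (since $\psi_0$ has conductor $\cO_w$) forces $\operatorname{supp}(f) \subseteq \mathfrak p_w^{-n}$; and under the diagonal torus part, which forces $f$ to be invariant under multiplication by $1+\mathfrak p_w^n$ in $E_w^\times$.

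The hard part will be incorporating invariance under the lower unipotent part of $\widetilde K'_w(n)$, whose action on the Kirillov model is implicit---given by a Bessel-type integral transform coming from the action of the long Weyl element---and requires more delicate analysis to convert into a usable support or decay constraint. Combining all three invariance conditions, one expects the resulting function space to be parametrized by points of $\mathbb P^1(\cO_w/\mathfrak p_w^n)$, yielding
\[
\dim \mu_w^{\widetilde K'_w(n)} \leq |\mathbb P^1(\cO_w/\mathfrak p_w^n)| = [\widetilde K'_w : (\widetilde K'_w \cap \widetilde P'_w)\widetilde K'_w(n)],
\]
which is the claimed bound. As a sanity check, equality is attained on unramified principal series, so the hypothesis that $\mu_w$ is generic is essential and the proof strategy via the Kirillov model is the natural one.
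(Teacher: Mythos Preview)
Your treatment of the equality of indices via the split identification $U(2)_p \simeq GL(2,E_w)$ is correct. For the inequality, however, you have located the crux but not proved it: the action of the lower unipotent (equivalently, of the Weyl element $w$) on the Kirillov model is a representation-dependent Bessel transform, and turning invariance under it into a uniform support constraint that cuts the space down to dimension $|\mathbb{P}^1(\cO_w/\mathfrak{p}_w^{v_p(n)})|$ is not straightforward. The heuristic ``one expects the resulting function space to be parametrized by points of $\mathbb{P}^1$'' is not justified as written; in fact for twists of Steinberg and for supercuspidals the dimension of $\widetilde{K}'_w(n)$-invariants is \emph{strictly smaller} than this index, so there is no such parametrization, only an inequality. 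To carry your argument through you would have to analyze the $w$-action separately for principal series, special, and supercuspidal representations, at which point the Kirillov machinery is no longer delivering a uniform proof.

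The paper's argument is much shorter and simply performs this case split at the outset, bypassing the Kirillov model entirely. If $\mu_w$ is a principal series or a twist of Steinberg, it is a subquotient of $\mathrm{Ind}_{\widetilde{P}'_w}^{GL(2,E_w)}\chi$ for some character $\chi$ of the diagonal torus; Mackey theory together with the Iwasawa decomposition $GL(2,E_w) = \widetilde{P}'_w \widetilde{K}'_w$ gives $\dim (\mathrm{Ind}\,\chi)^{\widetilde{K}'_w(n)} \le [\widetilde{K}'_w : (\widetilde{K}'_w \cap \widetilde{P}'_w)\widetilde{K}'_w(n)]$ at once. If $\mu_w$ is supercuspidal, one instead appeals to the explicit construction of supercuspidals of $GL(2)$ in Gelbart \S7.A, from which the bound on invariants under a principal congruence subgroup can be read off directly. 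In either case the claim reduces to a one-line reference, and no analysis of the Weyl-element action is needed.
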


\begin{proof}

If $\mu_w$ is a principal series representation or a twist of Steinberg, this is immediate.  If $\mu_w$ is supercuspidal, this follows by examining the construction of supercuspidal representations given in $\mathsection$7.A. of \cite{Ge}.

\end{proof}

\begin{cor}
\label{parbsplit}

Let $p \notin S_f$ be split in $E$, and let $w|p$.  Let $\Pi_{\psi_p} = \{ \pi_p \}$.  We have

\bes
\dim \pi_p^{K_p(n)} \le [K_p : (K_p \cap P_p) K_p(n)] [ K'_p : (K'_p \cap P'_p) K'_p(n) ] \dim \mu_w^{\widetilde{K}'_w(n)}.
\ees

\end{cor}

\begin{lemma}
\label{parbnonsplit}

Let $p \notin S_f$ be nonsplit in $E$, and let $w|p$.  We have

\bes
\sum_{\pi_p \in \Pi_{\psi_p}} \dim \pi_p^{K_p} \le \dim \mu_w^{\widetilde{K}'_w}.
\ees

\end{lemma}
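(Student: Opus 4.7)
The plan is to follow the template of Lemma \ref{paralocalnonsplit} closely: express the left-hand side as a trace sum, apply the character identities of \cite{Mo} to reduce to a twisted trace of a Speh representation, and then bound that trace directly.

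Write
\bes
\sum_{\pi_p \in \Pi_{\psi_p}} \dim \pi_p^{K_p} = \sum_{\pi_p \in \Pi_{\psi_p}} \tr(\pi_p(1_{K_p})).
\ees
Since $\psi^N = \nu(2) \boxtimes \phi^N$ is the tensor product of the irreducible two-dimensional representations $\nu(2)$ of the Arthur $SL(2,\C)$ and $\phi^N$ of $\cL_{\phi^N}$, Schur's lemma forces the centralizer of the image of $\tpsi$ in $\widehat{U(4)} = GL(4,\C)$ to consist only of scalars. In particular $s_\psi = \tpsi(e,-I) = -I$ is central, so the character identity of Theorem 3.2.1(b) of \cite{Mo} with $x = s_\psi$ coincides with the stable identity. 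Combining this with the definition (3.2.8) of the stable distribution $f \mapsto f^{U(4)}(\psi_p)$ via $\xi_+$, and using the twisted fundamental lemma to transfer $1_{K_p}$ to $1_{\widetilde{K}_w \rtimes \theta}$, we obtain
\bes
\sum_{\pi_p \in \Pi_{\psi_p}} \dim \pi_p^{K_p} = \tr\bigl(\widetilde{\pi}_{\psi^N_p}(1_{\widetilde{K}_w \rtimes \theta})\bigr),
\ees
where $\widetilde{\pi}_{\psi^N_p}$ is the canonical extension to $\widetilde{G}^+(4)_p$ of the Speh representation $\pi_{\psi^N_p}$, realized as the Langlands quotient of $\text{Ind}_{\widetilde{P}_w}^{GL(4,E_w)}(\mu_w|\det|^{1/2} \otimes \mu_w|\det|^{-1/2})$.

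It then remains to bound this twisted trace by $\dim \mu_w^{\widetilde{K}'_w}$. Because $\widetilde{K}_w$ is $\theta$-stable and the canonical extension acts as a unitary involution on the unitary representation $\pi_{\psi^N_p}$, the operator $\widetilde{\pi}_{\psi^N_p}(1_{\widetilde{K}_w \rtimes \theta})$ factors through $\pi_{\psi^N_p}^{\widetilde{K}_w}$ with operator norm at most one, so its trace in absolute value is bounded by $\dim \pi_{\psi^N_p}^{\widetilde{K}_w}$. The Langlands quotient $\pi_{\psi^N_p}$ admits a spherical vector exactly when $\mu_w$ is unramified, in which case $\dim \pi_{\psi^N_p}^{\widetilde{K}_w} = 1 = \dim \mu_w^{\widetilde{K}'_w}$; when $\mu_w$ is ramified both sides vanish. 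Positivity of the left-hand side resolves the sign.

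The main points requiring care are the centrality of $s_\psi$, which collapses the endoscopic identity to the stable one, and the unitarity of the canonical extension $\widetilde{\pi}_{\psi^N_p}$; both are part of Mok's construction in $\mathsection$2.3 and $\mathsection$3.2 of \cite{Mo}. Once these are in place, the final dimension comparison is elementary, since the spherical vectors of the Speh representation are inherited directly from those of $\mu_w$ via the Iwasawa decomposition.
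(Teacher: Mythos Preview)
Your argument is correct and follows essentially the same route as the paper: apply the stable character identity and the twisted fundamental lemma to rewrite the packet sum as $\tr(\widetilde{\pi}_{\psi_p}(1_{\widetilde{K}_w \rtimes \theta}))$, bound this twisted trace by $\dim \pi_{\psi_p}^{\widetilde{K}_w}$ using that $\widetilde{\pi}_{\psi_p}(\theta)$ is an involution, and then compare with $\dim \mu_w^{\widetilde{K}'_w}$. The only cosmetic differences are that the paper applies Theorem 3.2.1(b) of \cite{Mo} directly with $s = e$ rather than arguing that $s_\psi$ is central, and it bounds $\dim \pi_{\psi_p}^{\widetilde{K}_w}$ via the induced representation $\rho_{\psi_w}$ rather than by the direct unramified/ramified dichotomy you give; both amount to the observation that at hyperspecial level $\dim \mu_w^{\widetilde{K}'_w} \in \{0,1\}$.
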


\begin{proof}

Identify $\widetilde{K}_w$ with a subgroup of $G(4)_p$.  The twisted fundamental lemma implies that the functions $1_{K_p}$ and $1_{\widetilde{K}_w \rtimes \theta}$ are related by transfer.  Applying Proposition \ref{charid} with $s = e$ gives

\bes
1_{K_p}^{U(4)}(\psi_p) = \sum_{\pi_p \in \Pi_{\psi_p}} \dim \pi_p^{K_p},
\ees
and combining this with Proposition \ref{charGL} and the twisted fundamental lemma gives

\bes
\sum_{\pi_p \in \Pi_{\psi_p}} \dim \pi_p^{K_p} = \tr( \widetilde{\pi}_{\psi_p}( 1_{\widetilde{K}_w \rtimes \theta}) ).
\ees
The twisted trace $\tr( \widetilde{\pi}_{\psi_p}( 1_{\widetilde{K}_w \rtimes \theta}) )$ is equal to the trace of $\widetilde{\pi}_{\psi_p}(\theta)$ on $\pi_{\psi_p}^{\widetilde{K}_w}$, so we have

\bes
\tr( \widetilde{\pi}_{\psi_p}( 1_{\widetilde{K}_w \rtimes \theta}) ) \le \dim \pi_{\psi_p}^{\widetilde{K}_w}.
\ees
Under the identification $G(4)_p \simeq GL(4,E_w)$, $\pi_{\psi_p}$ is the Langlands quotient of the representation $\rho_{\psi_w}$ induced from $\mu_w(x_1) | \det(x_1) |^{1/2} \otimes \mu_w(x_2) | \det(x_2) |^{-1/2}$.  We therefore have

\bes
\dim \pi_{\psi_p}^{\widetilde{K}_w} \le \dim \rho_{\psi_w}^{\widetilde{K}_w} \le \dim \mu_w^{\widetilde{K}'_w},
\ees
and the result follows.

\end{proof}

\begin{lemma}
\label{parbSf}

Let $p \in S_f$, and let $w|p$.  There is a choice of $\widetilde{K}'_w$, depending only on $K$, such that

\bes
\sum_{\pi_p \in \Pi_{\psi_p}} \dim \pi_p^{K_p} \ll \dim \mu_w^{\widetilde{K}'_w}.
\ees

\end{lemma}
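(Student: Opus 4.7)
My plan is to handle the split and nonsplit subcases separately, paralleling Lemmas \ref{parbsplit} and \ref{parbnonsplit} but absorbing the lack of a twisted fundamental lemma at ramified primes into the choice of auxiliary data and the implied constants. For split $p \in S_f$ the packet is still the singleton $\{\pi_p\}$ with $\pi_p$ the Langlands quotient of $\rho_{\psi_w}$, so the Iwasawa/Mackey argument of Lemma \ref{parbsplit} carries over after a suitable choice of $\widetilde{K}'_w$.

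For nonsplit $p \in S_f$, I would fix any twisted transfer $\widetilde{f} \in \tcH_p(4)$ of $1_{K_p}$, which exists as part of the general theory of twisted endoscopic transfer. Applying the character identity of Theorem 3.2.1 (b) of \cite{Mo} with $s = e$ gives
\[
\sum_{\pi_p \in \Pi_{\psi_p}} \dim \pi_p^{K_p} \;=\; \sum_{\pi_p \in \Pi_{\psi_p}} \tr(\pi_p(1_{K_p})) \;=\; \tr\bigl(\widetilde{\pi}_{\psi_p}(\widetilde{f})\bigr).
\]
Choosing $\widetilde{K}_w \subset GL(4,\cO_w)$ small enough that $\widetilde{f}$ is bi-invariant under the twisted action of $\widetilde{K}_w$, the twisted operator $\widetilde{\pi}_{\psi_p}(\widetilde{f})$ factors through the finite-dimensional space $\pi_{\psi_p}^{\widetilde{K}_w}$, so the twisted trace is bounded by $C(\widetilde{f}) \cdot \dim \pi_{\psi_p}^{\widetilde{K}_w}$ with $C(\widetilde{f})$ depending only on $K_p$. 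Since $\pi_{\psi_p}$ is a subquotient of $\rho_{\psi_w}$, one then reduces to bounding $\dim \rho_{\psi_w}^{\widetilde{K}_w}$ by the analysis used in Lemma \ref{parbnonsplit}, producing $\ll \dim \mu_w^{\widetilde{K}'_w}$ for a $\widetilde{K}'_w$ determined by $\widetilde{K}_w$ and hence only by $K$.

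The principal obstacle is the last step. The naive Iwasawa/Mackey decomposition identifies $\dim \rho_{\psi_w}^{\widetilde{K}_w}$ with $\dim(\mu_w \otimes \mu_w)^{\widetilde{L}_w \cap \widetilde{K}_w}$, which factors as a product $\dim \mu_w^{J_1} \cdot \dim \mu_w^{J_2}$ for subgroups $J_1, J_2 \subset GL(2,E_w)$ projected from the two factors of the Levi; this is quadratic rather than linear in the desired quantity. Obtaining the linear bound claimed by the lemma forces one to choose $\widetilde{K}_w$ asymmetrically, so that (for example) $J_1$ is the maximal compact $GL(2,\cO_w)$ and the corresponding factor $\dim \mu_w^{J_1}$ is at most one, while $J_2$ determines $\widetilde{K}'_w$. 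Ensuring that this asymmetric choice is compatible with the support of $\widetilde{f}$ and with $K_p$, and that all resulting constants depend only on $K$, is the main technical point.
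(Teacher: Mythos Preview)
Your approach is essentially identical to the paper's up through the bound
\[
\sum_{\pi_p \in \Pi_{\psi_p}} \dim \pi_p^{K_p} \;\ll\; \dim \rho_{\psi_w}^{\widetilde{K}_w} \;\ll\; (\dim \mu_w^{\widetilde{K}'_w})^2,
\]
and you correctly isolate the issue that this is quadratic rather than linear in $\dim \mu_w^{\widetilde{K}'_w}$.  Where you diverge from the paper is in the resolution of this issue.

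The paper does \emph{not} attempt any asymmetric choice of $\widetilde{K}_w$.  Instead it makes the trivial observation that for a fixed compact open subgroup $\widetilde{K}'_w \subset GL(2,E_w)$, the dimension $\dim \mu_w^{\widetilde{K}'_w}$ is bounded uniformly over all irreducible admissible $\mu_w$ by a constant depending only on $\widetilde{K}'_w$ (uniform admissibility).  Hence
\[
(\dim \mu_w^{\widetilde{K}'_w})^2 \;\le\; C(\widetilde{K}'_w)\,\dim \mu_w^{\widetilde{K}'_w},
\]
and since $\widetilde{K}'_w$ was determined by $K_p$, the constant $C(\widetilde{K}'_w)$ depends only on $K$.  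That is the entire argument.

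Your proposed asymmetric construction is both unnecessary and likely unworkable: the group $\widetilde{K}_w$ is forced on you by the bi-invariance of the transfer $\widetilde{f}$, and there is no reason to expect you can arrange one Levi projection to be all of $GL(2,\cO_w)$.  The point you flag as ``the main technical point'' is a non-issue once you remember that at the finitely many primes in $S_f$ everything in sight is bounded by constants depending only on the fixed level data.
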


\begin{proof}

Suppose that $p$ is nonsplit.  By \cite[Prop 3.1.1(b)]{Mo}, we may choose a function $\widetilde{1}_{K_p} \in \tcH_p(4)$ corresponding to $1_{K_p}$ under twisted transfer.  Reasoning as in Lemma \ref{parbnonsplit} gives

\bes
\sum_{\pi_p \in \Pi_{\psi_p}} \dim \pi_p^{K_p} = \text{vol}(K_p)^{-1} \tr( \widetilde{\pi}_{\psi_p}( \widetilde{1}_{K_p} )),
\ees
where $\text{vol}(K_p)$ denotes the volume of $K_p$ with respect to our choice of Haar measure on $U(4)_p$.  If we choose $\widetilde{K}_w \subset GL(4,E_w) \simeq G(4)_p$ to be a compact open subgroup such that $\widetilde{1}_{K_p}$ is bi-invariant under $\widetilde{K}_w$, we have

\bes
\tr( \widetilde{\pi}_{\psi_p}( \widetilde{1}_{K_p} )) \ll \dim \pi_{\psi_p}^{\widetilde{K}_w}.
\ees
Under the identification $G(4)_p \simeq GL(4,E_w)$, $\pi_{\psi_p}$ is the Langlands quotient of the representation $\rho_{\psi_w}$ induced from $\mu_w(x_1) | \det(x_1) |^{1/2} \otimes \mu_w(x_2) | \det(x_2) |^{-1/2}$.  Choose $\widetilde{K}'_w$ so that the product $\widetilde{K}'_w \times \widetilde{K}'_w$ is contained in $\widetilde{K}_w$.  We then have

\bes
\dim \pi_{\psi_p}^{\widetilde{K}_w} \le \dim \rho_{\psi_w}^{\widetilde{K}_w} \ll (\dim \mu_w^{\widetilde{K}'_w})^2.
\ees
Bounding $\dim \mu_w^{\widetilde{K}'_w}$ by a constant depending on $\widetilde{K}'_w$, and hence $K_p$, completes the proof for $p$ nonsplit.  The proof in the split case follows in exactly the same way using the explicit description of $\pi_p$.

\end{proof}

Let $S_{E/\Q}$ be a set of finite places of $E$ that contains exactly one place above every finite place of $\Q$.  Combining Corollary \ref{parbsplit}, Lemma \ref{parbnonsplit}, and Lemma \ref{parbSf} gives

\bes
\sum_{\pi \in \Pi_{\psi,f}} \dim \pi_f^{K(n)} \ll [ K' : ( K' \cap P'(\A_f)) K'(n) ] [ K : ( K \cap P(\A_f)) K(n) ] \prod_{w \in S_{E/\Q}} \dim \mu_w^{\widetilde{K}'_w(n)}.
\ees
Proposition \ref{parblocal} now follows from the lemma below.

\begin{lemma}

There is a choice of $K'_p$ for $p \in S_f$, depending only on $K$, such that

\bes
\prod_{w \in S_{E/\Q}} \dim \mu_w^{\widetilde{K}'_w(n)} \ll \sum_{\pi'_f \in \Pi_{\psi',f}} \dim \pi_f'^{K'(n)}.
\ees

\end{lemma}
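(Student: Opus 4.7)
The plan is to factor the right-hand side over places and bound each local factor from below by $\dim \mu_w^{\widetilde{K}'_w(n)}$ at $w \mid p$. Since $\Pi_{\psi'}$ is a restricted tensor product of local packets and $|\Pi_{\psi'_\infty}| \geq 1$, we have
\[
\sum_{\pi' \in \Pi_{\psi'}} \dim \pi_f'^{K'(n)} \;\geq\; \prod_p \sum_{\pi'_p \in \Pi_{\psi'_p}} \dim \pi_p'^{K'_p(n)}.
\]
It remains to show each factor on the right is bounded below by the corresponding factor on the left; recall that $n$ is coprime to $S_f$ and divisible only by primes split in $E$, so at nonsplit and $S_f$ places the level coincides with $K'_p$ and $\widetilde{K}'_w$.

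For split $p \notin S_f$, the identification $U(2)_p \simeq GL(2, E_w)$ sends the singleton packet $\Pi_{\psi'_p} = \{\mu_w\}$ to $\mu_w$ and $K'_p(n)$ to $\widetilde{K}'_w(n)$, yielding equality of local factors.

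For nonsplit $p \notin S_f$, we repeat the argument of Lemma \ref{paralocalnonsplit} with $U(4)$ replaced by $U(2)$, $\xi_+$ replaced by $\xi_-$, and $\phi_1^N$ replaced by $\phi^N$. The packet $\Pi_{\psi'_p}$ is stable, so Theorem 3.2.1(b) of \cite{Mo} with $x = e$ together with the recipe (3.2.8) and the twisted fundamental lemma identifies $\sum_{\pi'_p} \dim \pi_p'^{K'_p}$ with the twisted trace of $\widetilde{\mu}_w$ on $1_{\widetilde{K}'_w \rtimes \theta}$. Involutivity of $\theta$ combined with positivity force this to equal $\dim \mu_w^{\widetilde{K}'_w} = \dim \mu_w^{\widetilde{K}'_w(n)}$.

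For $p \in S_f$ we must make a uniform choice of $K'_p$ depending only on $K$. The key observation is that, since $\widetilde{K}'_w$ is a fixed compact open subgroup, the set of irreducible admissible $\mu_w$ with $\mu_w^{\widetilde{K}'_w} \neq 0$ partitions into finitely many unramified-twist classes, and on each class $\dim \mu_w^{\widetilde{K}'_w}$ is a constant bounded by some $C(K)$. The associated local parameters $\psi'_p$ therefore lie in a finite collection of unramified-twist classes; since each $\Pi_{\psi'_p}$ is nonempty and the fixed-vector condition is invariant under unramified twisting once $K'_p$ is deep enough, we may pick a single $K'_p$ small enough that every relevant $\Pi_{\psi'_p}$ contains a representation with a nonzero $K'_p$-fixed vector. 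This gives $\sum_{\pi'_p} \dim \pi_p'^{K'_p} \geq 1 \gg \dim \mu_w^{\widetilde{K}'_w}$.

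Multiplying the local inequalities over all $p$ yields the lemma. The main technical step is the uniform choice of $K'_p$ at $p \in S_f$, which is a finiteness argument of Bernstein type; the remaining cases are essentially verbatim transcriptions of the character-identity computations already performed in Lemmas \ref{paralocalnonsplit} and \ref{parbSf}.
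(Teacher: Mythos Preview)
Your overall approach matches the paper's: factor the right-hand side over primes and compare place by place. Two points need attention.

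First, a minor correction at split places. Because $\psi' \in \Psi(U(2),\xi_-)$, the singleton packet is $\mu_w \otimes \chi_w^{-1}$, not $\mu_w$. For $p \notin S_f$ the character $\chi_w$ is unramified, so this does not affect fixed-vector dimensions; but at split $p \in S_f$ you must absorb the possible ramification of $\chi_w$ into your choice of $K'_p$.

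Second, and more seriously, your argument at nonsplit $p \notin S_f$ has a genuine gap. You correctly identify
\[
\sum_{\pi'_p \in \Pi_{\psi'_p}} \dim (\pi'_p)^{K'_p}
\;=\;
\tr\bigl(\widetilde{\mu}_w(\theta)\,\big|\,\mu_w^{\widetilde{K}'_w}\bigr),
\]
but then assert that ``involutivity of $\theta$ combined with positivity force this to equal $\dim \mu_w^{\widetilde{K}'_w}$.'' That implication fails in general: if the fixed space had dimension $d \ge 2$, involutivity would only give that the trace lies in $\{-d,-d+2,\ldots,d\}$, and positivity would still allow any nonnegative value of the correct parity, in particular values strictly smaller than $d$. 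What saves you here is the additional fact that $\widetilde{K}'_w = GL(2,\cO_w)$ is hyperspecial (as $p \notin S_f$), so $\dim \mu_w^{\widetilde{K}'_w} \in \{0,1\}$; the paper explicitly splits into these two cases. You should insert this observation before invoking positivity.

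At $p \in S_f$ your Bernstein-type finiteness argument is a legitimate alternative to the paper's explicit case split (supercuspidal/Steinberg versus principal series handled via Rogawski's description of the packets). Your version is cleaner in spirit, but the clause ``the fixed-vector condition is invariant under unramified twisting once $K'_p$ is deep enough'' deserves one more sentence: you need that unramified twists of $\mu_w$ induce unramified twists of the members of $\Pi_{\psi'_p}$ (so that the $K'_p$-fixed-vector dimension is constant along each twist orbit). This is true and standard, but should be said.
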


\begin{proof}

We may factorise the right hand side as

\bes
\sum_{\pi'_f \in \Pi_{\psi',f}} \dim \pi_f'^{K'(n)} = \prod_p \sum_{\pi_p' \in \Pi_{\psi'_p}} \dim \pi_p'^{K_p'(n)}.
\ees
Let $p$ be an arbitrary prime, and $w | p$.  It suffices to show that

\be
\label{Kdescend}
\dim \mu_w^{\widetilde{K}'_w(n)} \le \sum_{\pi_p' \in \Pi_{\psi'_p}} \dim \pi_p'^{K_p'(n)}
\ee
if $p \notin S_f$, and that if $p \in S_f$ the same inequality holds with a constant factor depending only on $\widetilde{K}'$, and hence $K$.

If $p$ is split, then $\Pi_{\psi'_p}$ contains a single representation that is isomorphic to $\mu_w \otimes \chi_w^{-1}$ under the identification $U(2)_p \simeq GL(2,E_w)$, and (\ref{Kdescend}) is immediate.

Suppose that $p \notin S_f$ is nonsplit.  The definition of $\psi'_p$ implies that if $\xi_- : {}^LU(2)_p \rightarrow {}^LG(2)_p$, the representation of $G(2)_p \simeq GL(2,E_w)$ associated to $\xi_- \circ \psi_p' \in \Psi_p(2)$ is $\mu_w$.  We let $\widetilde{\mu}_w$ denote the canonical extension of $\mu_w$ to a representation of $\widetilde{G}^+(2)_p$, and identify $\widetilde{K}'_w$ with a subgroup of $G(2)_p$.  Proposition \ref{charGL} and the twisted fundamental lemma give

\be
\label{descendtrace}
\tr( \widetilde{\mu}_w(1_{\widetilde{K}'_w \rtimes \theta})) = \sum_{\pi_p' \in \Pi_{\psi'_p}} \tr( \pi_p'(1_{K_p'})) = \sum_{\pi_p' \in \Pi_{\psi'_p}} \dim \pi_p'^{K_p'}.
\ee
The left hand side of (\ref{descendtrace}) is equal to the trace of $\widetilde{\mu}_w(\theta)$ on $\mu_w^{\widetilde{K}_w'}$.  If $\dim \mu_w^{\widetilde{K}_w'} = 0$ then both sides of (\ref{descendtrace}) are 0, and (\ref{Kdescend}) holds.  If $\dim \mu_w^{\widetilde{K}_w'} = 1$, then $\theta^2 = 1$ implies that $\tr( \widetilde{\mu}_w(1_{\widetilde{K}'_w \rtimes \theta})) = \pm 1$.  Positivity implies that we must take the plus sign so that (\ref{Kdescend}) also holds.

Suppose that $p \in S_f$ is nonsplit, and suppose that the left hand side of (\ref{Kdescend}) is nonzero.  Up to twist, there are only finitely many possibilities for $\mu_w$ that are supercuspidal or Steinberg, and we may deal with these cases by simply choosing $K_p'$ so that (\ref{Kdescend}) is true in each case.  If $\mu_w$ is induced from a unitary character of the Borel, then $\Pi_{\psi'_p}$ is described explicitly in $\mathsection$11.4 of \cite{Ro} and (\ref{Kdescend}) follows easily from this description.

\end{proof}

\subsection{Summing over parameters}
\label{secparbsum}

We define $\Phi_\text{rel} \subset \widetilde{\Phi}_\text{sim}(2)$ to be the set of even parameters $\phi^N$ such that $\phi_\infty^N$ is given by

\bes
\phi_\infty^N : z \mapsto \left( \begin{array}{cc} z/\overline{z} & \\ & \overline{z}/z \end{array} \right).
\ees
It may be shown in the same way as Lemma \ref{archlocal} that if $\psi \in \Psi(U(4), \xi_+)$ satisfies $\psi^N = \nu(2) \boxtimes \phi^N$ with $\phi^N \in \widetilde{\Phi}_\text{sim}(2)$, and $\pi \in \Pi_{\psi_\infty}$ satisfies $H^*(\g,K; \pi) \neq 0$, then $\phi^N \in \Phi_\text{rel}$.  If $\phi^N \in \Phi_\text{rel}$ corresponds to the cusp form $\mu$, and $\chi_\infty$ is given by $\chi_\infty(z) = (z/\overline{z})^{1/2 + t}$ with $t \in \Z$, then $\mu_\infty \times \chi_\infty$ has infinitesimal character $(3/2 + t, -1/2+t; -3/2-t, 1/2-t) \in (\C^2 / S_2) \times (\C^2 / S_2)$.  Theorem 1.2 of \cite{Ca} then implies that $\mu$ is tempered at all places.  It follows from this discussion that

\be
\label{bsum1}
h^3_{(2)}(Y(n))^\star \ll \sum_{ \substack{ \psi^N = \nu(2) \boxtimes \phi^N \\ \phi^N \in \Phi_\text{rel} } } \sum_{\pi \in \Pi_\psi} \dim \pi_f^{K(n)}.
\ee

Applying Proposition \ref{parblocal} to the sum on the right hand side (and ignoring the factors $\#(\Pi_{\psi_\infty})$ as in Section \ref{secparasum}) gives

\be
\label{bsum2}
h^3_{(2)}(Y(n))^\star \ll [ K' : ( K' \cap P'(\A_f)) K'(n) ] [ K : ( K \cap P(\A_f)) K(n) ] \sum_{ \substack{ \psi' \in \Psi(U(2), \xi_-) \\ \psi'^N \in \Phi_\text{rel} } } \sum_{\pi' \in \Pi_{\psi'}} \dim \pi_f'^{K'(n)}.
\ee
The restriction on the infinitesimal characters of parameters in $\Phi_\text{rel}$ implies that there is a finite set of representations $\Xi_\infty$ of $U(2)_\infty$ such that if $\psi'^N \in \Phi_\text{rel}$, then all the representations in $\Pi_{\psi'_\infty}$ are in $\Xi_\infty$.  Because $\Phi_\text{rel}$ consists of simple generic parameters we have $\Pi_{\psi'} = \Pi_{\psi'}(\epsilon_{\psi'})$, and so every $\pi' \in \Pi_{\psi'}$ occurs in $L^2_\text{disc}(U(2)(\Q) \backslash U(2)(\A))$ with multiplicity one.  If we define $X(n) = U(2)(\Q) \backslash U(2)(\A) / K'(n)$, and let $m(\pi_\infty, X(n))$ denote the multiplicity as in Section \ref{secparasum}, this gives

\begin{align}
\notag
\sum_{ \substack{ \psi' \in \Psi(U(2), \xi_-) \\ \psi'^N \in \Phi_\text{rel} } } \sum_{\pi' \in \Pi_{\psi'}} \dim \pi_f'^{K'(n)} & \le \sum_{ \substack{ \pi' \in L^2_\text{disc}(U(2)(\Q) \backslash U(2)(\A)) \\ \pi'_\infty \in \Xi_\infty } } \dim \pi_f'^{K'(n)} \\
\notag
& = \sum_{\pi_\infty \in \Xi_\infty} m( \pi_\infty, X(n)) \\
\label{bsum3}
& \ll [K' : K'(n)].
\end{align}
Combining (\ref{bsum1})--(\ref{bsum3}) gives

\bes
h^3_{(2)}(Y(n))^\star \ll [ K' : ( K' \cap P'(\A_f)) K'(n) ] [ K : ( K \cap P(\A_f)) K(n) ] [K' : K'(n)],
\ees
and applying the formula for the order of $GL(N)$ over a finite field completes the proof.

\subsection{The case of composite $\phi^N$}
\label{secparbcomposite}

We now suppose that $\phi^N = \phi_1^N \boxplus \phi_2^N$, where $\phi_i^N \in \tPhi(1)$ correspond to conjugate self-dual characters $\mu_i$.  We may prove the following analogue of Proposition \ref{parblocal}.

\begin{prop}

There is a choice of $\widetilde{K}_{1,w}$ for $w|p \in S_f$, depending only on $K$, such that 

\bes
\sum_{\pi_f \in \Pi_{\psi,f}} \dim \pi_f^{K(n)} \ll [ K : (K \cap P(\A_f)) K(n)]  \dim \mu_1^{\widetilde{K}_1(n)} \dim \mu_2^{\widetilde{K}_1(n)}.
\ees

\end{prop}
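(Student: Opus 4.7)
The plan is to mirror Proposition \ref{parblocal} closely; the main difference is that the descent at the end goes directly to the two characters $\mu_1, \mu_2$ instead of to a packet on $U(2)$. As in that proposition, I would factor
$$\sum_{\pi \in \Pi_\psi} \dim \pi_f^{K(n)} = \prod_p \sum_{\pi_p \in \Pi_{\psi_p}} \dim \pi_p^{K_p(n)}$$
and bound each local factor in the form $[K_p : (K_p \cap P_p) K_p(n)] \dim \mu_{1,w}^{\widetilde{K}_{1,w}(n)} \dim \mu_{2,w}^{\widetilde{K}_{1,w}(n)}$ for a suitable choice of $\widetilde{K}_{1,w}$ at $p \in S_f$.

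At a place $p$ split in $E$, $\Pi_{\psi_p}$ is a singleton containing the Langlands quotient of a representation of $GL(4,E_w)$ normalizedly induced from $\widetilde{P}_w$ with inducing data built from $\mu_{1,w} \circ \det$ and $\mu_{2,w} \circ \det$. As in Lemma \ref{parbsplit}, bounding $\dim \pi_p^{K_p(n)}$ by the dimension of the $\widetilde{K}_w(n)$-fixed vectors of the full induction and applying the Iwasawa decomposition produces the coset factor $[K_p : (K_p \cap P_p) K_p(n)]$ times the dimension of $(\mu_{1,w} \circ \det) \otimes (\mu_{2,w} \circ \det)$ fixed under $\widetilde{K}_{2,w}(n) \times \widetilde{K}_{2,w}(n)$. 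Since $\det \widetilde{K}_{2,w}(n) = \widetilde{K}_{1,w}(n)$, this last factor equals $\dim \mu_{1,w}^{\widetilde{K}_{1,w}(n)} \dim \mu_{2,w}^{\widetilde{K}_{1,w}(n)}$.

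At a nonsplit place $p \notin S_f$ we have $K_p(n) = K_p$, and I would reproduce the argument of Lemma \ref{parbnonsplit}: the twisted fundamental lemma combined with the $s = e$ case of Theorem 3.2.1 (b) of \cite{Mo} gives
$$\sum_{\pi_p \in \Pi_{\psi_p}} \dim \pi_p^{K_p} = \tr(\widetilde{\pi}_{\psi_p}(1_{\widetilde{K}_w \rtimes \theta})) \le \dim \pi_{\psi_p}^{\widetilde{K}_w},$$
and the same Langlands-quotient description of $\pi_{\psi_p}$ on $GL(4,E_w)$ then bounds the right-hand side by $\dim \mu_{1,w}^{\widetilde{K}_{1,w}} \dim \mu_{2,w}^{\widetilde{K}_{1,w}}$. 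For $p \in S_f$ I would follow Lemma \ref{parbSf}: transfer $1_{K_p}$ to a function in $\tcH_p(4)$, choose $\widetilde{K}_w$ so that the transfer is bi-$\widetilde{K}_w$-invariant, and then choose $\widetilde{K}_{1,w}$ small enough that $\widetilde{K}_{1,w} \times \widetilde{K}_{1,w}$ lies in the Levi part of $\widetilde{K}_w$, absorbing a constant depending only on $K_p$.

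Multiplying the local bounds over all $p$, and using that $[K_p : (K_p \cap P_p) K_p(n)] = 1$ at every nonsplit $p$, gives the stated inequality via $\prod_p [K_p : (K_p \cap P_p) K_p(n)] = [K : (K \cap P(\A_f)) K(n)]$. The only work not already done in $\mathsection$\ref{secparb} is the bookkeeping at $p \in S_f$, namely choosing $\widetilde{K}_{1,w}$ uniformly in terms of $K$; this is the main (and modest) obstacle, and it is handled exactly as in Lemma \ref{parbSf}.
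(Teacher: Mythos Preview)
Your proposal is correct and follows essentially the same route the paper indicates (the paper itself only sketches this case, saying it goes as in Proposition \ref{parblocal}). One small remark: at split $p$ you bound $\pi_p$ by the full induced representation, whereas the paper observes that here $\pi_{\psi_p}$ actually \emph{equals} the full induction of $\mu_{1,w}\circ\det \otimes \mu_{2,w}\circ\det$ from $\widetilde P_w$ (this is why the bound is sharp); your inequality is of course still valid. Also, at $p\in S_f$ the phrase ``$\widetilde K_{1,w}\times\widetilde K_{1,w}$ lies in the Levi part of $\widetilde K_w$'' should be read through the determinant map, i.e.\ you want $\widetilde K_{1,w}\subseteq\det(\widetilde K_w\cap\widetilde L_w)$ so that $(\mu_i\circ\det)^{\widetilde K_w\cap\widetilde L_w}\neq 0$ forces $\mu_i^{\widetilde K_{1,w}}\neq 0$.
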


Unlike Proposition \ref{parblocal}, this bound is sharp.  The reason for this is that the representation $\pi_{\psi_p}$ for split $p$ is equivalent to the induction of $(\mu_{1,w} \circ \det(x_1)) | \det(x_1)|^{1/2} \otimes (\mu_{2,w} \circ \det(x_2)) | \det(x_2)|^{-1/2}$ from $\widetilde{P}_w$ to $GL(4,E_w)$, and it is easy to give a sharp bound for the dimension of invariants under $\widetilde{K}_w(n)$, unlike the Speh representations considered in Lemma \ref{Speh}.  We obtain a bound of $n^{6 + \epsilon}$ for the contribution of these parameters to $h^3_{(2)}(Y(n))^\star$.

\end{document}